\documentclass[11pt]{amsart}
\usepackage{amsfonts,latexsym,amsthm,amssymb,graphicx,amscd}
\usepackage[all]{xy}
\usepackage[usenames]{color} 
\usepackage{amscd,epsfig}
\usepackage{hyperref}
\usepackage{pdfsync}
\usepackage{graphicx}
\usepackage{tableau}

\DeclareFontFamily{OT1}{rsfs}{}
\DeclareFontShape{OT1}{rsfs}{n}{it}{<-> rsfs10}{}
\DeclareMathAlphabet{\mathscr}{OT1}{rsfs}{n}{it}

\setlength{\textwidth}{6 in}
\setlength{\textheight}{8.75 in}
\setlength{\topmargin}{-0.25in}
\setlength{\oddsidemargin}{0.25in}
\setlength{\evensidemargin}{0.25in}

\CompileMatrices

\newtheorem{theorem}{Theorem}[section]

\newtheorem{corol}[theorem]{Corollary}
\newtheorem{prop}[theorem]{Proposition}

{\theoremstyle{definition} }
{\theoremstyle{remark} \newtheorem{remark}[theorem]{Remark}
\newtheorem{example}[theorem]{Example}}

\numberwithin{equation}{section}

\newcommand{\A}{{\mathbb A}}

\newcommand{\C}{{\mathbb C}}
\newcommand{\Pbb}{{\mathbb{P}}}
\newcommand{\Q}{{\mathbb Q}}
\newcommand{\T}{{\mathbb T}}
\newcommand{\Z}{{\mathbb Z}}
\newcommand{\cF}{{\mathcal F}}
\newcommand{\conF}{{\sf F}}
\newcommand{\cA}{{\mathcal A}}
\newcommand{\cO}{{\mathcal O}}
\newcommand{\caD}{{\mathcal D}}
\newcommand{\cH}{{\mathcal H}}

\newcommand{\cM}{{\mathcal M}}
\newcommand{\caIC}{{\mathcal I\mathcal C}}
\newcommand{\one}{1\hskip-3.5pt1}
\newcommand{\chc}{{\check{c}}}
\newcommand{\chs}{{\check{s}}}
\newcommand{\csm}{{c_{\text{SM}}}}
\newcommand{\ssm}{{s_{\text{SM}}}}
\newcommand{\cful}{{c_{\text{F}}}}
\newcommand{\chcsm}{{\chc_{\text{SM}}}}
\newcommand{\chssm}{{\chs_{\text{SM}}}}
\newcommand{\cma}{{c_{\text{Ma}}}}
\newcommand{\sma}{{s_{\text{Ma}}}}
\newcommand{\cIH}{{c_{\text{IH}}}}
\newcommand{\SM}{s_*}
\newcommand{\chSM}{\chs_*}
\newcommand{\chcIH}{{\chc_{\text{IH}}}}
\newcommand{\chcma}{{\chc_{\text{Ma}}}}
\newcommand{\chsma}{{\chs_{\text{Ma}}}}
\newcommand{\chima}{{\chi_{\text{Ma}}}}
\newcommand{\chiIH}{{\chi_{\text{IH}}}}
\newcommand{\qede}{\hfill$\lrcorner$}
\DeclareMathOperator{\Mil}{{Mi}}
\DeclareMathOperator{\sMil}{{SMi}}
\DeclareMathOperator{\chMil}{{\check\Mil}}
\DeclareMathOperator{\chsMil}{{\check\sMil}}
\DeclareMathOperator{\Gr}{Gr}
\DeclareMathOperator{\Char}{Char}
\DeclareMathOperator{\CC}{CC}
\DeclareMathOperator{\Eu}{Eu}
\DeclareMathOperator{\chEu}{\check Eu}
\DeclareMathOperator{\Segre}{Segre}
\DeclareMathOperator{\mult}{mult}
\DeclareMathOperator{\DR}{DR}
\DeclareMathOperator{\Perv}{Perv}
\DeclareMathOperator{\IC}{IC}
\DeclareMathOperator{\im}{im}
\DeclareMathOperator{\Mod}{Mod}
\DeclareMathOperator{\supp}{supp}

\begin{document}
\title{Positivity of Segre-MacPherson classes}

\date{\today}

\author{Paolo Aluffi}
\address{
Mathematics Department, 
Florida State University,
Tallahassee FL 32306, USA
}
\email{aluffi@math.fsu.edu}

\author{Leonardo C.~Mihalcea}
\address{
Department of Mathematics, 
Virginia Tech University, 
Blacksburg VA 24061, USA
}
\email{lmihalce@vt.edu}

\author{J\"org Sch\"urmann}
\address{Mathematisches Institut, Universit\"at M\"unster, Germany}
\email{jschuerm@uni-muenster.de}

\author{Changjian Su} \address{Department of Mathematics, University
  of Toronto, Toronto ON M5S 1A1, Canada }
\email{changjiansu@gmail.com}

\subjclass[2010]{Primary 14C17, 14M17; Secondary 32C38,  32S60}

\thanks{L.~C.~Mihalcea was supported in part by NSA Young Investigator
  Award H98320-16-1-0013 and the Simons Collaboration Grant 581675;
  J. Sch\"{u}rmann was funded by the Deutsche Forschungsgemeinschaft
  (DFG, German Research Foundation) under Germany's Excellence
  Strategy -- EXC 2044--390685587, Mathematics M\"{u}nster: Dynamics
  -- Geometry -- Structure}

\begin{abstract}
Let $X$ be a complex nonsingular variety with globally generated
tangent bundle. We prove that the signed Segre-MacPherson (SM) class
of a constructible function on~$X$ with effective characteristic cycle
is effective. {This observation has a surprising number of applications to positivity 
questions in classical situations, unifying previous results in the
literature and yielding several new results. We survey a selection
of such results in this paper.} 
For example, we prove
general effectivity results for SM classes of subvarieties which admit 
proper (semi-)small resolutions and for regular or affine embeddings. 
Among these, we mention the effectivity of (signed) Segre-Milnor 
classes of complete intersections if $X$ is projective and an alternation 
property for SM classes of Schubert cells in flag manifolds; the latter 
result proves and generalizes a variant of a conjecture of Feh{\'e}r and 
Rim{\'a}nyi. Among other applications we prove the positivity of 
Behrend's Donaldson-Thomas invariant for a closed subvariety of an 
abelian variety and the signed-effectivity of the intersection homology 
Chern class of the theta divisor of a non-hyperelliptic curve;
and we extend the (known) 
non-negativity of the Euler characteristic of perverse sheaves on a 
semi-abelian variety to more general varieties dominating an abelian 
variety.\end{abstract}

\maketitle

\section{Introduction}\label{sec:intro}

\subsection{}
In this note, $X$ will denote a nonsingular complex variety and
$Z\subseteq X$ will be a closed subvariety; here (sub)varieties are by
definition irreducible and reduced.  We will assume that the tangent
bundle of $X$ is globally generated. In the projective case, this is
equivalent to asking that $X$ be a projective homogeneous
variety---for example a projective space, a flag manifold, or an
abelian variety; but our main results will hold in the non-complete
case as well.  We denote by $A_*(Z)$ the Chow group of 
{cycles on $Z$ modulo rational equivalence}, and by
$\conF(Z)$ the group of constructible functions on $Z$; here we allow
$Z$ to be more generally a closed reduced subscheme of $X$.
{Our general aim is to investigate the {\em positivity\/} of certain rational 
equivalence classes associated with the embedding of $Z$ in $X$, or 
more generally with suitable constructible functions on $Z$. 
We generalize several known positivity results on e.g.,
Euler characteristics, and provide a framework leading to analogous
results in a broad range of situations.}
Answering a conjecture of Deligne and Grothendieck,
MacPherson~\cite{macpherson:chern} constructed a group homomorphism
$c_*: \conF(Z) \to A_*(Z)$ which commutes with proper push-forwards
and satisfies a normalization property: if $Z$ is non-singular, then
$c_*(\one_Z) = c(TZ) \cap [Z] $, where $c(TZ)$ is the total Chern
class of $Z$. (MacPherson worked in homology;
see~\cite[Example~19.1.7]{fulton:IT} for the refinement of the theory
to the Chow group.)  If $Y \subseteq Z$ is a constructible subset, the
Chern-Schwartz-MacPherson (CSM) class $\csm(Y)\in A_*(Z)$ is the image
$c_*(\one_Y)$ of the indicator function of $Y$ under MacPherson's
natural transformation. Let $\varphi \in \conF(Z)$. We will focus on
the closely related Segre-MacPherson (SM) class
\[
\SM(\varphi,X):= c(TX|_Z)^{-1}\cap c_*(\varphi) \in A_*(Z) \/. 
\] 
(The class $c(TX|_{Z})$ is invertible in $A_*(Z)$, because it is of
the form $1+ a$, where $a$ is nilpotent.)  In particular, we let
$\ssm(Y,X)$ denote the Segre-Schwartz-MacPherson (SSM) class
$\SM(\one_Y,X)\in A_*(Z)$; note that this class depends on both $Y$
and the ambient variety~$X$. If $Y$ is a subvariety of $Z$, then the
top-degree component of $\ssm(Y,X)$ in $A_{\dim Y}(Z)$ is the
fundamental class $[\overline{Y}]$ of the closure of $Y$.  Further, if
$Y=Z$ is a {\em nonsingular\/} closed subvariety of $X$, then $\ssm(Y,X)\in
A_*Y$ equals the ordinary Segre class $s(Y,X)$; in general, the two
classes differ (as discussed in subsection~\ref{Milnor classes} for
$Y$ a global complete intersection in a nonsingular projective variety
$X$).  See~\cite{aluffi:IE1} and (in the equivariant
case)~\cite{ohmoto:eqcsm} for general properties of SM classes,
and~\cite{schurmann:transversality} for their compability with
transversal pullbacks.

There are `signed' versions of both $c_*$ and $\SM$ (respectively, 
$\csm$ and $\ssm$), which appear naturally when relating them to 
characteristic cycles. If $c_*(\varphi) = c_0+ c_1 + \ldots $ is the decomposition into
homogeneous components (i.e., $c_i \in A_i(Z)$) then the `signed'
class $\chc_*(\varphi)$ is defined by
\[ 
\chc_*(\varphi) = c_0 - c_1 + c_2 - \ldots \quad \text{and} \quad
\chcsm(Y):=\chc_*(\one_Y) \/;
\] 
that is, by changing the sign of each homogeneous component of odd 
dimension. One defines similarly the signed SM class 
\[
\chSM(\varphi,X):=  c(T^*X|_Z)^{-1} \cap
\chc_*(\varphi) \quad \text{and} \quad \chssm(Y,X):=\chSM(\one_Y,X)\/.
\] 

A basis for $\conF(Z)$ consists of the {\em local Euler
obstructions\/} $\Eu_Y$ for closed subvarieties $Y$ of~$Z$. In fact,
the characteristic cycle of the (signed) local Euler obstruction is an
irreducible Lagrangian cycle in $T^*X$, and from this perspective the
functions $\Eu_Y$ are the `atoms' of the theory; see
equation~\eqref{eq:CC} below. If $Y$ is nonsingular,
$\Eu_Y=\one_Y$. The local Euler obstruction is a subtle and
well-studied invariant of singularities (see e.g.,
\cite{macpherson:chern, MR647684, MR634426, MR1301184, MR1783853,
  MR2600874}).  The corresponding class $c_*(\Eu_Z)\in A_*(Z)$ for $Z$
a subvariety of $X$ is the {\em Chern-Mather class\/} of $Z$,
$\cma(Z)=\nu_*(c(\tilde{T})\cap[\tilde{Z}])$, with $\nu: \tilde{Z}\to
Z$ the {\em Nash blow-up\/} of $Z$ and $\tilde{T}$ the tautological
bundle on $\tilde{Z}$ extending $TZ_{reg}$,
cf.~\cite{macpherson:chern} or~\cite[Example~4.2.9]{fulton:IT}. In
particular $\cma(Z)=c(TZ)\cap [Z]$ if $Z$ is nonsingular. If $Z$ is
complete, we denote by $\chima(Z):=\chi(Z,\Eu_Z)$ the degree
of~$\cma(Z)$; so $\chima(Z)$ equals the usual topological Euler
characteristic $\chi(Z)$ if $Z$ is nonsingular and complete. We also
consider the corresponding Segre-Mather class $\sma(Z,X):=
\SM(\Eu_Z,X)$ as well as the signed classes
\[ 
\chcma(Z):= \chc_*(\Eu_Z); \quad \chsma(Z,X) := c(T^*X|_Z)^{-1}
\cap \chc_*(\Eu_Z) \/.  
\]
With our conventions, we get $(-1)^{\dim Z}\chcma(Z)
=\nu_*(c(\tilde{T}^*)\cap[\tilde{Z}])$ in terms of the dual
tautological bundle on the Nash blow-up (which differs by the sign
$(-1)^{\dim Z}$ from the definition of the signed Chern-Mather class
used in some references like~\cite{sabbah:quelques,
  schurmann.tibar}).

The main result in this paper is the 
{(signed)} 
effectivity of Segre-MacPherson
classes in a large class of examples.  By an {\em effective\/} class
we mean a class which can be represented by a nonzero, non-negative,
cycle.

\begin{theorem}\label{thm:mather}
Let $X$ be a complex nonsingular variety, and assume that the tangent
bundle~$TX$ is globally generated. Let $Z\subseteq X$ be a closed
subvariety of $X$. Then the following hold:

(a) The class $(-1)^{\dim Z}\chsma(Z,X)\in
A_*(Z)$ is effective.

(b) Assume that the inclusion $U \hookrightarrow Z$ is an affine
morphism, where $U$ is a locally closed smooth subvariety of $Z$. Then
$(-1)^{\dim U}\chssm(U,X)\in A_*(Z)$ is effective.\end{theorem}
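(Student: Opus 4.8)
The plan is to prove (a) by an explicit Chern-class computation on the Nash blow-up, and then to deduce (b) from (a) via the microlocal dictionary between constructible functions and characteristic cycles. For part (a), I would start from the formula $(-1)^{\dim Z}\chcma(Z)=\nu_*(c(\tilde T^*)\cap[\tilde Z])$ recorded above, where $\nu\colon\tilde Z\to Z$ is the Nash blow-up and $\tilde T\subseteq\nu^*TX|_Z$ is the tautological subbundle. Since $\chsma(Z,X)=c(T^*X|_Z)^{-1}\cap\chcma(Z)$, the projection formula gives
\[
(-1)^{\dim Z}\chsma(Z,X)=\nu_*\big(c(\nu^*T^*X|_Z)^{-1}\,c(\tilde T^*)\cap[\tilde Z]\big).
\]
The defining sequence $0\to\tilde T\to\nu^*TX|_Z\to Q\to 0$ dualizes to $0\to Q^*\to\nu^*T^*X|_Z\to\tilde T^*\to 0$, so $c(\nu^*T^*X|_Z)^{-1}c(\tilde T^*)=c(Q^*)^{-1}$, the total Segre class of $Q^*$. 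Thus everything reduces to showing that $\nu_*\big(c(Q^*)^{-1}\cap[\tilde Z]\big)$ is effective.

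The key point is that $TX$ globally generated forces $\nu^*TX|_Z$, and hence its quotient $Q$, to be globally generated. Choosing a surjection $\cO^{\oplus N}\twoheadrightarrow Q$ with locally free kernel $K$ and dualizing yields a surjection $\cO^{\oplus N}\twoheadrightarrow K^*$, so $K^*$ is again globally generated and $c(Q^*)^{-1}=c(K^*)$. Hence $(-1)^{\dim Z}\chsma(Z,X)=\nu_*\big(c(K^*)\cap[\tilde Z]\big)$. Because $K^*$ is globally generated it is the pullback of the universal quotient bundle under a morphism $\tilde Z\to\Gr$ to a Grassmannian, so each $c_i(K^*)\cap[\tilde Z]$ is the pullback of an effective special Schubert class; Kleiman transversality in characteristic zero guarantees these pullbacks are represented by effective cycles of the expected dimension. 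Pushing forward by the proper map $\nu$ preserves effectivity, and the degree-$\dim Z$ term is $\nu_*[\tilde Z]=[Z]\neq 0$, which establishes (a). The technical point to watch here is exactly this transversality, ensuring the degeneracy loci computing the $c_i(K^*)$ have the expected codimension.

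For part (b), I would pass through perverse sheaves. Factor the affine locally closed immersion as $U\xrightarrow{a}\bar U\xrightarrow{\iota}Z\hookrightarrow X$, where $a$ is an affine \emph{open} immersion into the closure and $\iota$ is a closed immersion; affineness of the composite together with separatedness of $\iota$ forces $a$ to be affine. Since $U$ is smooth of dimension $d=\dim U$, the shifted constant sheaf $\mathbb Q_U[d]$ is perverse; Artin vanishing for the affine open immersion $a$ shows $a_!\mathbb Q_U[d]$ is perverse, and the closed pushforwards preserve perversity, so $P:=\iota_*a_!\mathbb Q_U[d]$ is a perverse sheaf on $X$ with associated constructible function $(-1)^d\one_U$. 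As the characteristic cycle of a perverse sheaf is effective, $(-1)^{\dim U}\CC(\one_U)=\CC(P)=\sum_Y m_Y[T^*_YX]$ with all $m_Y\ge 0$. Using the normalization $\CC(\Eu_Y)=(-1)^{\dim Y}[T^*_YX]$ from \eqref{eq:CC} and the injectivity of $\CC$, this reads $(-1)^{\dim U}\one_U=\sum_Y m_Y(-1)^{\dim Y}\Eu_Y$; applying the linear operator $\chSM(-,X)$ gives
\[
(-1)^{\dim U}\chssm(U,X)=\sum_Y m_Y\,(-1)^{\dim Y}\chsma(Y,X).
\]
By part (a) each $(-1)^{\dim Y}\chsma(Y,X)$ is effective and each $m_Y\ge 0$, so the right-hand side is effective, proving (b).

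The hard part will be the microlocal input in (b): deriving the perversity of $\iota_*a_!\mathbb Q_U[d]$ from the affineness hypothesis via the correct form of Artin vanishing, and invoking the effectivity of characteristic cycles of perverse sheaves with orientations matching \eqref{eq:CC}. Once the cycle $(-1)^{\dim U}\CC(\one_U)$ is known to be effective, the reduction to (a) is formal; the delicate bookkeeping is keeping the two independent signs $(-1)^{\dim U}$ and $(-1)^{\dim Y}$ consistent throughout. In (a) the only real subtlety is the transversality underlying the effectivity of the Chern classes of the globally generated bundle $K^*$.
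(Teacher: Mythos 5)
Your proposal is correct, and it reaches the theorem by a genuinely different route from the paper's, at least for part (a). The paper deduces both parts from the single general statement Theorem~\ref{thm:ssmpos}: by Sabbah's formula (Theorem~\ref{thm:checkssm}), $\chSM(\varphi,X)=\Segre(\CC(\varphi))$, and the Segre class of any effective conic Lagrangian cycle is non-negative because $\cO_{\Pbb(T^*X\oplus\one)}(1)$ is globally generated as a quotient of $TX\oplus\one$; part (a) is then the trivially effective case $\CC((-1)^{\dim Z}\Eu_Z)=[T^*_ZX]$, and part (b) follows because for the affine inclusion $j$ the extension $j_!\cO_U$ is a single regular holonomic $\caD_X$-module with effective characteristic cycle (the perverse-sheaf variant you use, via perversity of $j_!\C_U[\dim U]$, is exactly the paper's Remark~\ref{rmk:perverseU}). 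You instead avoid Sabbah's formula entirely: in (a) you compute on the Nash blow-up, where the signed Segre--Mather class becomes $\nu_*\bigl(c(Q^*)^{-1}\cap[\tilde Z]\bigr)=\nu_*\bigl(c(K^*)\cap[\tilde Z]\bigr)$ with $K^*$ globally generated, and you invoke the Kleiman--Fulton positivity of Chern classes of globally generated bundles on possibly singular varieties (\cite[Example~12.1.7]{fulton:IT}); in (b) you use the same microlocal input as the paper but then expand $\CC$ in conormal cycles as in \eqref{E:CCexp}--\eqref{eq:eqconst} and conclude termwise from (a) by linearity of $\chSM(-,X)$, rather than invoking Theorem~\ref{thm:ssmpos}. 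The comparison: the paper's argument is uniform and immediately yields the stronger Theorem~\ref{thm:ssmpos} for arbitrary $\varphi$ with effective characteristic cycle, with all positivity concentrated in one line about $\cO(1)$; your argument is more elementary and self-contained, replacing Sabbah's index-type formula by the classical Nash blow-up definition of Chern--Mather classes plus classical positivity of globally generated bundles --- and since your step (b) reduction works verbatim for any $\varphi$ with effective $\CC(\varphi)$, it in fact reproves Theorem~\ref{thm:ssmpos} without Sabbah's theorem. The points you flag as delicate all check out: $Q$, hence $K$, is locally free; finitely many sections generate by quasi-compactness of $\tilde Z$; proper push-forward preserves non-negativity; and the top-degree terms ($[Z]$ in (a), resp.\ the classes $[Y]$ of maximal dimension with $m_Y>0$ in (b)) supply the nonzero requirement in the paper's definition of an effective class, matching the non-vanishing argument inside the paper's proof of Theorem~\ref{thm:ssmpos}.
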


If in addition $X$ is assumed to be complete, then the requirement that
$TX$ is globally generated is equivalent to $X$ being a homogeneous
variety; cf.~e.g., \cite[Corollary~2.2]{brion:spherical}. Further,
Borel and Remmert \cite{borel.remmert:uber} (see also~\cite[Theorem
  2.6]{brion:spherical}) prove that all complete homogeneous varieties
are products $(G/P) \times A$, where $G$ is a semisimple Lie group, $P
\subseteq G$ is a parabolic subgroup, and $A$ is an abelian variety.

{Theorem~\ref{thm:mather} is extended to more general constructible
functions in Theorem~\ref{thm:ssmpos}, below.} These theorems
may be used to prove several positivity statements, unifying and
generalizing analogous results from the existing literature. We
list below the situations
{which we will highlight in this paper to illustrate 
applications of our methods,}
and the sections where these are discussed.
\begin{enumerate}
\item[(a)] Closed subvarieties of abelian varieties; primarily in
  \S\ref{s:abelian}.
\item[(b)] The proof of a generalization of a
conjecture of Feh{\'e}r and Rim{\'a}nyi~\cite{feher.rimanyi:csmdeg}
concerning SSM classes of Schubert cells in Grassmannians; \S\ref{ss:FR}.
\item[(c)] Complements of hyperplane arrangements; \S\ref{ss:hyp}
\item[(d)] Positivity of certain Donaldson-Thomas type invariants;
  \S\ref{ss:DT}.
\item[(e)] Intersection homology Segre and Chern classes; \S\ref{s:charIH}.
\item[(f)] Semi-small resolutions; \S\ref{ss:ssmall}.
\item[(g)] Regular embeddings and Milnor classes; \S\ref{Milnor classes}.
\item[(h)] Semi-abelian varieties and generalizations; \S\ref{Semi-abelian}.
\end{enumerate}

{Ultimately, these positivity statements follow from the effectivity
of the associated characteristic cycles. In~\S\ref{s:poschar} we survey a more
comprehensive list of situations in which the characteristic cycle is positive.}

The proof of Theorem~\ref{thm:mather} will be given in
\S\ref{ss:proof} below. It is suprisingly easy, but not elementary;
it is based on a classical formula by Sabbah
\cite{sabbah:quelques}, calculating the (signed) CSM class of a
constructible function $\varphi$ in terms of its characteristic cycle
$\CC(\varphi)$; see Theorem~\ref{thm:checkssm} below.

{\em Acknowledgements.\/} P.A.~thanks the University of Toronto for
the hospitality.  L.M.~is grateful to R.~Rim{\'a}nyi for many
stimulating discussions about the CSM and SSM classes, including the
positivity conjecture from \cite{feher.rimanyi:csmdeg}, and to the
Math Department at UNC Chapel Hill for the hospitality during a
sabbatical leave in the academic year 2017-18. The authors are
grateful to an anonymous referee whose comments prompted a
reorganization of this paper.

Finally, this paper is dedicated to Professor William Fulton with the
occasion of his 80th birthday. His interest in positivity questions arising in algebraic geometry, and his influential ideas, continue to inspire us.
\section{Characteristic classes via characteristic cycles; proof of 
the main theorem}\label{sec:ccvcc}

\subsection{Characteristic cycles}
Let $X$ be a smooth complex variety. We recall a commutative diagram
which plays a central role in 
{seminal work of Ginzburg}
\cite{ginzburg:characteristic}; it is largely based on results
from~\cite{beilinson.bernstein:localisation,
  brylinski.kashiwara:KL,kashiwara.tanisaki:characteristic}. We also
considered this diagram in our previous work \cite[\S6]{AMSS:shadows},
and we use the notation from this reference.

\begin{equation}\label{E:diagram}
\vcenter{\xymatrix@C=40pt{
\Perv(X) \ar[d]_{\chi_{stalk}} & \Mod_{rh}(\caD_X) \ar[l]_{\DR}^\sim 
\ar[d]^\Char \\
\conF (X) \ar[r]^\CC_\sim & L(X) 
}}
\end{equation}
Here $\Mod_{rh}(\caD_X)$ denotes the Abelian category of algebraic
holonomic $\caD_X$-modules with regular singularities, and $\Perv(X)$
is the Abelian category of perverse (algebraically) constructible
complexes of sheaves of $\C$-vector spaces on $X$; $\conF(X)$ is the
group of constructible functions on $X$ and $L(X)$ is the group of
conic Lagrangian cycles in $T^*X$.  The functor $\DR$ is defined on $M
\in \Mod_{rh}(\caD_X)$ by
\[
\DR(M) = R\cH om_{\caD_X}(\cO_X,M)[\dim X]\/,
\]
that is, it computes the DeRham complex of a holonomic module (up to a
shift), viewed as an {\em analytic\/} $\caD_X$-module. This functor
realizes the Riemann-Hilbert correspondence, and is an equivalence. We
refer to e.g., \cite{kashiwara.tanisaki:characteristic,
  ginzburg:characteristic} for details. The left map $\chi_{stalk}$
computes the stalkwise Euler characteristic of a constructible
complex, and the right map $\Char$ gives the characteristic cycle of a
holonomic $\caD_X$-module. The map $\CC$ is the characteristic cycle
map for constructible functions; if $Z \subseteq X$ is closed and
irreducible, then
\begin{equation}\label{eq:CC}
\CC (\Eu_Z) = (-1)^{\dim Z} [T^*_Z X] \/;
\end{equation} 
here $\Eu_Z$ is the local Euler obstruction (see~\S\ref{sec:intro}),
and $T^*_Z X := \overline{T^*_{Z^{reg}} X}$ is the conormal space of
$Z$, i.e., the closure of the conormal bundle of the smooth locus of
$Z$. The commutativity of diagram~\eqref{E:diagram} is shown in
\cite{ginzburg:characteristic} using deep $\caD$-module techniques; it
also follows from \cite[Example~5.3.4, p.~359--360]{schurmann:book}
(even for a holonomic $\caD$-module without the regularity
requirement).  Also note that the upper transformations in
\eqref{E:diagram} factor over the corresponding Grothendieck groups,
so they also apply to complexes of such $\caD$-modules. If $f:X \to Y$
is a proper map of smooth complex varieties, there are well-defined
push-forwards for each of the objects in the diagram, denoted by
$f_*$. Furthermore, all the maps commute with proper push forwards;
cf.~\cite[Appendix]{ginzburg:characteristic}. For others proofs, 
see~\cite[Proposition~4.7.5]{HTT} for the transformation $\DR$, 
\cite[\S2.3]{schurmann:book} for the transformation $\chi_{stalk}$ and
\cite[\S4.6]{schurmann:lectures} for the transformation $\CC$ (for
the transformation $\Char$ it then follows from the commutativity of
diagram~\eqref{E:diagram}).

The next result, relating characteristic cycles to (signed) CSM
classes, has a long history. See \cite[Lemme 1.2.1]{sabbah:quelques},
and more recently \cite[(12)]{PP:hypersurface},
\cite[\S4.5]{schurmann:lectures},
\cite[\S3]{schurmann:transversality}, especially diagram (3.1)
in~\cite{schurmann:transversality}.

\begin{theorem}\label{thm:checkssm} 
Let $X$ be a complex nonsingular variety, and let $Z\subseteq X$ be a
closed reduced subscheme.  Let $\varphi \in \conF(Z)$ be a
constructible function on $Z$. Then
\[ 
\chc_*(\varphi) = c(T^*X|_Z) \cap \Segre(\CC(\varphi)) 
\] 
as elements in the Chow group $A_*(Z)$ of $Z$. Here
$\Segre(\CC(\varphi))$ is the Segre class associated to the conic
Lagrangian cycle $\CC(\varphi) \subseteq T^*X|_Z$.
\end{theorem}

We recall the definition of the Segre class used in
Theorem~\ref{thm:checkssm}. Let $q: \Pbb(T^*X|_Z \oplus \one) \to Z$
be the projection from the restriction of the projective completion of
the cotangent bundle of $X$. If $C \subseteq T^*X|_Z$ is a cone
supported over $Z$, and $\overline{C}$ is the closure in $\Pbb(T^*X|_Z
\oplus \one)$, the Segre class is defined by
\[ 
\Segre(C): = q_* \bigl(
  \sum_{i \ge 0} c_1(\cO_{\Pbb(T^*X|_Z \oplus \one)}(1))^i \cap 
[\overline{C}] 
  \bigr) 
\]
as an element of $A_*(Z)$; see \cite[\S4.1]{fulton:IT}.

Every irreducible conic Lagrangian subvariety of $T^*X$ is a
conormal cycle $T^*_Z X$ for $Z \subseteq X$ a closed subvariety; see
e.g.,~\cite[Theorem~E.3.6]{HTT}. From this it follows that
every non-trivial characteristic cycle is a
linear combination of conormal spaces:
\begin{equation}\label{E:CCexp}
\CC(\varphi)=\sum_Y a_Y [T^*_Y X]
\end{equation}
for uniquely determined closed subvarieties $Y$ of $Z$ and nonzero
integer coefficients $a_Y$. By~\eqref{eq:CC}, the coefficients $a_Y$
are determined by the equality of constructible functions
\begin{equation}\label{eq:eqconst}
0\neq \varphi = \sum_Y a_Y (-1)^{\dim Y}\Eu_Y \/.
\end{equation}

\subsection{Proof of the main theorem}\label{ss:proof} 
The following result is at the root of all applications in this note.

\begin{theorem}\label{thm:ssmpos}
Let $X$ be a complex nonsingular variety such that $TX$ is globally
generated. Let $Z\subseteq X$ be a closed reduced subscheme of $X$ and
let $\varphi \in \conF(Z)$ be a constructible function on~$Z$ such
that the characteristic cycle $\CC(\varphi) \in A_*(T^*X|_Z)$ is
effective.

Then $\chSM(\varphi,X)$ is effective in~$A_*(Z)$.  If $TX$ is trivial
(e.g.,~$X$ is an abelian variety), then $\chc_*(\varphi)$ is
effective.
\end{theorem}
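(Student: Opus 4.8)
The plan is to reduce everything to Theorem~\ref{thm:checkssm}, which expresses the signed class $\chc_*(\varphi)$ as $c(T^*X|_Z)\cap \Segre(\CC(\varphi))$. Dividing by $c(T^*X|_Z)$ (which is invertible in $A_*(Z)$), this reads
\[
\chSM(\varphi,X) = c(T^*X|_Z)^{-1}\cap \chc_*(\varphi) = \Segre(\CC(\varphi))\/.
\]
So proving that $\chSM(\varphi,X)$ is effective is \emph{exactly} the same as proving that the Segre class $\Segre(\CC(\varphi))$ of the characteristic cycle is effective. This is the crucial simplification: the cumbersome Chern-class correction factor disappears, and the global-generation hypothesis on $TX$ must enter precisely at the level of this Segre class.

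First I would recall the definition of $\Segre(\CC(\varphi))$ as a push-forward from the projective completion $\Pbb(T^*X|_Z\oplus\one)$, and unwind how the hypothesis that $TX$ is globally generated translates to a statement about $T^*X$. The key point is that $\cO_{\Pbb(T^*X|_Z\oplus\one)}(1)$, whose powers are being capped against $[\overline{\CC(\varphi)}]$, should be (at least nef/globally generated) so that all the intersection products $c_1(\cO(1))^i\cap[\overline{\CC(\varphi)}]$ are represented by effective cycles. Since $\CC(\varphi)$ is assumed effective, $[\overline{\CC(\varphi)}]$ is an effective cycle in the projective completion; capping an effective cycle with a positive power of a globally generated (hence nef, and indeed base-point-free, so movable to general position) line bundle yields again an effective class, and the proper push-forward $q_*$ preserves effectivity. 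The main obstacle---and the one real input---is verifying that global generation of $TX$ makes $\cO(1)$ behave well enough on the projectivized cotangent bundle to guarantee this effectivity; concretely, one wants that the tautological quotient on $\Pbb(T^*X|_Z\oplus\one)$ is globally generated, which follows because a surjection $\cO_X^{\oplus N}\twoheadrightarrow TX$ dualizes and completes to present $T^*X\oplus\one$ as a quotient of a trivial bundle, and projectivizing makes $\cO(1)$ a quotient of $\cO^{\oplus(N+1)}$, hence base-point-free.

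With that in place, the effectivity of $\chSM(\varphi,X)=\Segre(\CC(\varphi))$ is immediate: write $[\overline{\CC(\varphi)}]=\sum_j m_j[V_j]$ with $m_j>0$ (possible by effectivity of $\CC(\varphi)$, noting the closure of an effective cone is still effective), intersect each $[V_j]$ successively with general members of the base-point-free linear system $|\cO(1)|$ to obtain effective representatives of $c_1(\cO(1))^i\cap[V_j]$, and push forward by the proper map $q$. Each term is effective and the coefficients $m_j$ are positive, so the total sum is a nonzero non-negative cycle.

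For the final sentence, when $TX$ is trivial we have $c(T^*X|_Z)=1$, so Theorem~\ref{thm:checkssm} collapses to $\chc_*(\varphi)=\Segre(\CC(\varphi))=\chSM(\varphi,X)$, and the effectivity of $\chc_*(\varphi)$ is the same statement already proved; no separate argument is needed beyond observing that the trivial tangent bundle is a fortiori globally generated. I expect the writeup to be very short, with essentially all the content residing in the translation to Segre classes and the base-point-freeness of $\cO(1)$ coming from global generation of $TX$.
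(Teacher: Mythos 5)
Your overall strategy is exactly the paper's: reduce via Theorem~\ref{thm:checkssm} to $\chSM(\varphi,X)=\Segre(\CC(\varphi))$, show that $\cO_{\Pbb(T^*X|_Z\oplus\one)}(1)$ is globally generated, and conclude because capping with the first Chern class of a globally generated line bundle and proper push-forward both preserve non-negativity. However, two steps as written do not hold up.

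First, the nonvanishing. The paper defines ``effective'' to mean representable by a \emph{nonzero} non-negative cycle, and your closing claim --- ``each term is effective and the coefficients $m_j$ are positive, so the total sum is a nonzero non-negative cycle'' --- is a non sequitur: each term $q_*\bigl(c_1(\cO(1))^i\cap[V_j]\bigr)$ is merely non-negative and can perfectly well vanish. For instance, the $i=0$ term $q_*[\overline{\CC(\varphi)}]$ is zero whenever every support $Y$ has $\dim Y<\dim X$, since the components of $\CC(\varphi)$ are $\dim X$-dimensional and are contracted by $q$; likewise general members of a base-point-free system may cut $V_j$ in loci that $q$ contracts. So positivity of the $m_j$ alone does not exclude $\Segre(\CC(\varphi))=0$, and you are missing the separate argument the paper supplies: writing $\CC(\varphi)=\sum_Y a_Y[T^*_YX]$ with $a_Y>0$ as in~\eqref{E:CCexp}, one has $\Segre(\CC(\varphi))=\sum_Y a_Y(-1)^{\dim Y}\chsma(Y,X)$ by~\eqref{eq:eqconst}, and the component in the maximal dimension $d=\max_Y \dim Y$ is the positive combination $\sum_{\dim Y=d}a_Y[Y]$ of fundamental classes, whence the class is nonzero.

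Second, a dualization slip in your key step. Dualizing a surjection $\cO_X^{\oplus N}\twoheadrightarrow TX$ yields a subbundle inclusion $T^*X\hookrightarrow\cO_X^{\oplus N}$, \emph{not} a presentation of $T^*X\oplus\one$ as a quotient of a trivial bundle; indeed $T^*X\oplus\one$ is typically not globally generated (for $X=\Pbb^n$ the cotangent bundle has no nonzero global sections). Your desired conclusion is nevertheless true, and in the convention of \cite[\S4.1]{fulton:IT} used here, where $\Pbb(-)$ parametrizes lines, the correct one-line reason is the paper's: $\cO_{\Pbb(T^*X\oplus\one)}(1)$ is the dual of the tautological subbundle $\cO(-1)\subseteq q^*(T^*X\oplus\one)$, hence a quotient of $q^*(TX\oplus\one)$, which is globally generated since $TX$ is. (Alternatively, your embedding idea works if run with the subbundle: $T^*X\oplus\one\subseteq\cO^{\oplus(N+1)}$ induces a closed embedding $\Pbb(T^*X|_Z\oplus\one)\hookrightarrow Z\times\Pbb^N$ under which $\cO(1)$ restricts from the hyperplane class, so it is base-point-free.) With these two repairs your argument coincides with the paper's proof; the treatment of the trivial-$TX$ case is fine as stated.
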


\begin{proof}{By Theorem~\ref{thm:checkssm},
\[
\chSM(\varphi,X) = c(T^*X|_Z)^{-1}\cap 
\chc_*(\varphi) =\Segre(\CC(\varphi))\/.
\]
By hypothesis we have a decomposition~\eqref{E:CCexp} with positive
coefficients $a_Y$. It follows from~\eqref{eq:eqconst} that the Segre class
of $\CC(\varphi)$ is a linear combination of Segre classes of subvarieties:
\[ 
\Segre(\CC(\varphi)) = \sum_Y a_Y (-1)^{\dim Y} \Segre(\CC(\Eu_Y)) 
= \sum_Y a_Y (-1)^{\dim Y} \chsma(Y,X) \/.
\] 
By definition, the top degree part of each signed Segre-Mather class
$(-1)^{\dim Y} \chsma(Y,X)$ equals $[Y]$. Then the top degree part of
$\Segre(\CC(\varphi))$ equals a positive linear combination of those 
fundamental classes $[Y]$ of maximal dimension, and in particular
$\chSM(\varphi,X) = \Segre(\CC(\varphi))$ is not zero.}
Since the tangent bundle $TX$ is globally generated, it follows that
the line bundle $\cO_{\Pbb(T^*X \oplus \one)}(1)$ is globally
generated, as it is a quotient of $TX \oplus \one$.  Therefore its
first Chern class preserves non-negative classes.  Since
non-negativity is preserved by proper push-forwards, we can conclude
that under the given hypotheses $\Segre(\CC(\varphi))$ is
non-negative, and this completes the proof.
\end{proof}

Theorem~\ref{thm:mather} follows from Theorem~\ref{thm:ssmpos}:

\begin{proof}[Proof of Theorem~\ref{thm:mather}]
{
By Theorem~\ref{thm:ssmpos} it suffices to show that the
characteristic cycles for the constructible functions $(-1)^{\dim Z}
\Eu_Z$ and $(-1)^{\dim U} \one_U$ are effective. 
If $\varphi=\chEu_Z :=(-1)^{\dim Z}\Eu_Z$, then $\CC(\varphi)$ 
equals the conormal cycle $[T^*_Z X]$ of~$Z$, and it is therefore 
trivially effective.

Consider then $(-1)^{\dim U}\one_U$, and let $j:U \hookrightarrow X$
be the inclusion. We use the Riemann-Hilbert correspondence 
(diagram~\eqref{E:diagram}) to express the characteristic cycle.
By definition,
\[
(-1)^{\dim U} \one_U = \chi_{stalk}(j_!\C_U[\dim U])\/.
\]
Since $U$ is nonsingular, the sheaf $\C_U[\dim U]$ is perverse, and
$\cO_U$ is the corresponding regular holonomic $\caD_U$-module. 
We have $j_!\C_U[\dim U]=\DR(j_!(\cO_U))$; since $j$ is an affine 
morphism, $j_!(\cO_U)$ is a single regular holonomic 
$\caD_X$-module (with support in $Z$); see~\cite[p.~95]{HTT}.  
As pointed out in~\cite[p.~119]{HTT}, the characteristic cycles of 
non-trivial holonomic $\caD_X$-modules are effective, and this 
finishes the proof.}
\end{proof}

\begin{remark}\label{rmk:perverseU} 
{As the proof shows,}
the hypothesis that $U$ is smooth in Theorem~\ref{thm:mather}(ii) can
be weakened, by only requiring that that $\C_U[\dim U]$ is a perverse
sheaf. The proof of the effectivity then uses the fact that for an affine
inclusion $j: U \hookrightarrow Z$, {$j_!\C_U[\dim U]$} is a perverse sheaf
on {$Z$} 
(\cite[Lemma~6.0.2, p.~384 and Theorem~6.0.4, p.~409]{schurmann:book}).
{We will formalize this conclusion below, in 
Proposition~\ref{prop:holon}.}
For the case in which
$U=X\smallsetminus D$ is the open complement of a hypersurface $D$ in
$Z:=X$, the result also follows from~\cite[Proposition~6.0.2,
p.~404]{schurmann:book}.
\qede\end{remark}

\subsection{Effective characteristic cycles (I) } 
The applications in the rest of the paper follow from Theorem~\ref{thm:ssmpos}: they
represent situations when the characteristic cycle $\CC(\varphi)$ is
effective. 

{As pointed out above, every non-trivial characteristic cycle is a
linear combination of conormal spaces \eqref{E:CCexp}, and the
coefficients $a_Y$ in a linear combination are determined by the 
equality of constructible functions~\eqref{eq:eqconst}.}
The characteristic cycle of $\varphi\neq 0$ is effective if and only
if the coefficients $a_Y$ are positive. In particular, this condition
is intrinsic to the constructible function $\varphi\in \conF(Z)$ and
does not depend on the chosen closed embedding of $Z$ into an ambient
nonsingular variety $X$. 

A key source of examples where $\CC(\varphi)$ is effective, but
possibly reducible, arises as follows.  Constructible functions may be
associated with (regular) holonomic $\caD$-modules and perverse
sheaves $\cF\in \Perv(Z)$, cf.~diagram~\eqref{E:diagram}; for example, 
in the latter case the value
of the constructible function $\varphi:=\chi_{stalk}(\cF)$ at the
point $z\in Z$ is the Euler characteristic $\varphi(z)=\chi(\cF_z)$ of
the stalk at $z$ of the given complex of sheaves $\cF$.

\begin{prop}\label{prop:holon}
Let $X$ be a complex nonsingular variety such that $TX$ is globally
generated, and let $Z\subseteq X$ be a closed reduced subscheme.  Let
$0\neq \varphi\in \conF(Z)$ be a non-trivial constructible function
associated with
{a regular holonomic $\caD_X$-module supported on~$Z$, 
or (equivalently) a perverse sheaf on $Z$.}
Then $\chSM(\varphi,X)$ is effective in~$A_*(Z)$.
\end{prop}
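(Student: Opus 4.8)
The plan is to reduce the statement to the effectivity criterion already established in Theorem~\ref{thm:ssmpos}. Since $TX$ is globally generated, that theorem asserts that $\chSM(\varphi,X)$ is effective as soon as the characteristic cycle $\CC(\varphi)\in A_*(T^*X|_Z)$ is effective. So the entire content is to prove the effectivity of $\CC(\varphi)$, and the hypothesis that $\varphi$ is associated with a perverse sheaf (equivalently, a regular holonomic $\caD_X$-module) is exactly what will supply this. First I would record that a perverse sheaf $\cF$ on $Z$ may be viewed, via push-forward along the closed immersion $Z\hookrightarrow X$ (which is exact on perverse sheaves), as a perverse sheaf on $X$ supported on $Z$; by the Riemann--Hilbert correspondence in diagram~\eqref{E:diagram} this is $\DR(M)$ for a uniquely determined regular holonomic $\caD_X$-module $M$ supported on $Z$, and the associated constructible function is the stalkwise Euler characteristic $\varphi=\chi_{stalk}(\cF)$.

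Next I would invoke the commutativity of diagram~\eqref{E:diagram}. Chasing $M$ around the square gives
\[
\CC(\varphi)=\CC\bigl(\chi_{stalk}(\DR(M))\bigr)=\Char(M),
\]
so that the characteristic cycle of $\varphi$ in the sense of constructible functions coincides with the characteristic cycle $\Char(M)$ of the holonomic module $M$. The key input is then the classical fact already used in the proof of Theorem~\ref{thm:mather}(b), and recorded in~\cite[p.~119]{HTT}, that the characteristic cycle of a nonzero holonomic $\caD_X$-module is effective. Since $\varphi\neq 0$, the module $M$ is nonzero, whence $\Char(M)=\CC(\varphi)$ is a nonzero effective cycle; Theorem~\ref{thm:ssmpos} then delivers the effectivity of $\chSM(\varphi,X)$, completing the argument.

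The hard part is not the effectivity of $\Char$, which is standard, but keeping the translation between the three pictures clean: a perverse sheaf on the (possibly singular) subscheme $Z$, a regular holonomic $\caD_X$-module supported on $Z$ over the ambient smooth $X$, and the resulting conic Lagrangian cycle. In particular diagram~\eqref{E:diagram} is stated over $X$, whereas $\varphi$ and $\cF$ live on $Z$; the point I would make explicit is that everything is pushed forward along the closed immersion, the relevant $\caD_X$-module and its characteristic cycle are supported over $Z$, and the Lagrangian cycle $\CC(\varphi)$ therefore lies in $T^*X|_Z$, exactly as required by the Segre-class computation of Theorem~\ref{thm:checkssm}. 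With this bookkeeping in place, the proposition is a direct formalization of the argument in Remark~\ref{rmk:perverseU}.
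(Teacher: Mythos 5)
Your proposal is correct and follows essentially the same route as the paper's proof: reduce via Theorem~\ref{thm:ssmpos} to the effectivity of $\CC(\varphi)$, identify $\CC(\varphi)$ with $\Char(M)$ through the Riemann--Hilbert correspondence and the commutativity of diagram~\eqref{E:diagram}, and invoke the effectivity of characteristic cycles of nonzero holonomic $\caD_X$-modules from~\cite[p.~119]{HTT}. The only difference is that you make explicit the bookkeeping (pushforward along the closed immersion $Z\hookrightarrow X$, support of $M$ over $Z$, and $\varphi\neq 0\Rightarrow M\neq 0$) that the paper leaves implicit, which is a welcome clarification rather than a deviation.
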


\begin{proof}
This follows from the argument used in the proof of 
Theorem~\ref{thm:ssmpos}: the main observation is that the
characteristic cycle of a non-trivial (regular) holonomic $\caD$-module 
is effective; see e.g.,~\cite[p.~119]{HTT}. Further, perverse sheaves 
correspond to regular holonomic $\caD$-modules by means of the 
Riemann-Hilbert correspondence (see e.g.,~\cite[Theorem~7.2.5]{HTT}), 
compatibly with the construction of the associated constructible functions 
and characteristic cycles; cf.~diagram~\eqref{E:diagram}.
\end{proof}

There are situations where the characteristic cycle associated to a
constructible sheaf is known to be irreducible: examples include
characteristic cycles of the intersection cohomology sheaves of
Schubert varieties in the Grassmannian~\cite{MR1084458}, in more
general minuscule spaces~\cite{MR1451256}, of certain determinantal
varieties~\cite{zhang:chern}, and of the theta divisors in the
Jacobian of a non-hyperelliptic curve~\cite{MR1642745}.  In all such
cases, $\chSM(\varphi,X)$ is effective provided that $TX$ is globally
generated, by Theorem~\ref{thm:ssmpos}.  Also note that for the
varieties $Z$ listed above, the Chern-Mather class $\cma(Z)$ equals
$\cIH(Z)$, the intersection homology class defined in \S\ref{s:charIH}
below. This follows because in this case the characteristic cycle of
$\caIC_Z$ is irreducible, thus it must agree with the conormal cycle
of $Z$.

In the next few sections we discuss specific applications of
Theorem~\ref{thm:ssmpos} for various choices of the variety $X$ or
constructible function $\varphi$. The sections are mostly logically
independent of each other, and the reader may skip directly to the
case of interest. The only exception are the results concerning
Abelian varieties; these will be mentioned throughout this note.

A more detailed discussion on effective characteristic cycles is
given in \S\ref{s:poschar} below, including a more comprehensive
list of constructible
functions $\varphi$ for which
$\CC(\varphi)$ is effective,
and operations on characteristic functions which preserve the
effectivity of the corresponding characteristic cycles.

\section{Abelian varieties}\label{s:abelian}
If $X$ is an abelian variety, then $TX$ is trivial.  (In fact, this
characterizes abelian varieties among complete varieties,
cf.~\cite[Corollary~2.3]{brion:spherical}.) If $TX$ is trivial, then
for all constructible functions $\varphi$ on $Z$ the signed SM class
agrees with the signed CM class: $\chSM(\varphi,X)= \chc_*(\varphi)\in
A_*(Z)$. In particular, $\chsma(Z,X)= \chcma(Z)$ for $Z$ a subvariety
of $X$.  The following result follows then immediately from
Theorems~\ref{thm:mather} and~\ref{thm:ssmpos}.

\begin{corol}\label{corol:abelmather}
Let $Z$ be a closed subvariety of a smooth variety $X$ with $TX$
trivial (for example, an abelian variety). Then $(-1)^{\dim Z}
\chcma(Z)$ is effective.

More generally, let $\varphi$ be a constructible function on $Z$ such
that $\CC(\varphi)$ is effective. Then $\chc_*(\varphi) \in A_*(Z)$ is
effective.
\end{corol}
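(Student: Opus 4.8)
The plan is to deduce Corollary~\ref{corol:abelmather} directly from the two main theorems, using the special feature that a trivial tangent bundle collapses the signed Segre--MacPherson class onto the signed Chern--MacPherson class. First I would record the identity that drives everything: when $TX$ is trivial, the restriction $TX|_Z$ and its dual $T^*X|_Z$ are trivial bundles on~$Z$, so their total Chern classes are~$1$. Consequently $c(T^*X|_Z)^{-1}=1$, and by the very definition of the signed SM class we obtain $\chSM(\varphi,X)=c(T^*X|_Z)^{-1}\cap\chc_*(\varphi)=\chc_*(\varphi)$ for every constructible function $\varphi\in\conF(Z)$. This is exactly the reduction already noted in the paragraph preceding the corollary, and it means that every effectivity statement about $\chSM(\varphi,X)$ transfers verbatim to the signed CM class $\chc_*(\varphi)$.

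For the first assertion I would apply Theorem~\ref{thm:mather}(a). That theorem gives the effectivity of $(-1)^{\dim Z}\chsma(Z,X)$ under the hypothesis that $TX$ is globally generated; since $TX$ trivial is certainly globally generated, the hypothesis holds. By the definition $\chsma(Z,X)=c(T^*X|_Z)^{-1}\cap\chc_*(\Eu_Z)=\chc_*(\Eu_Z)=\chcma(Z)$, where the middle equality uses the triviality identity above applied to $\varphi=\Eu_Z$. Therefore $(-1)^{\dim Z}\chsma(Z,X)=(-1)^{\dim Z}\chcma(Z)$, and the effectivity of the left-hand side yields the effectivity of $(-1)^{\dim Z}\chcma(Z)$, as claimed.

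For the second, more general assertion I would invoke Theorem~\ref{thm:ssmpos}. Given a constructible function $\varphi$ on $Z$ with $\CC(\varphi)$ effective, that theorem asserts in its final sentence precisely that $\chc_*(\varphi)$ is effective when $TX$ is trivial. Alternatively, and perhaps more cleanly for the write-up, I would note that the first conclusion of Theorem~\ref{thm:ssmpos} gives the effectivity of $\chSM(\varphi,X)$, and then apply the triviality identity $\chSM(\varphi,X)=\chc_*(\varphi)$ to conclude. Either route is immediate.

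I do not expect any genuine obstacle here: the corollary is a direct specialization of the two preceding theorems, and the only point requiring care is the bookkeeping that identifies $\chSM$ with $\chc_*$ and $\chsma$ with $\chcma$ in the trivial-tangent-bundle case. The one thing worth stating explicitly, rather than glossing, is that ``$TX$ trivial'' implies ``$TX$ globally generated,'' so that the hypotheses of Theorem~\ref{thm:mather} and Theorem~\ref{thm:ssmpos} are satisfied; everything else is a substitution of definitions.
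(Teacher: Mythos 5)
Your proposal is correct and follows exactly the paper's own route: the paper likewise observes that $TX$ trivial makes $c(T^*X|_Z)=1$, so that $\chSM(\varphi,X)=\chc_*(\varphi)$ and $\chsma(Z,X)=\chcma(Z)$, and then deduces the corollary immediately from Theorem~\ref{thm:mather}(a) and Theorem~\ref{thm:ssmpos}. Your explicit remark that triviality implies global generation, so the hypotheses of both theorems hold, is the only point of care, and you handle it correctly.
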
 

As an example, the total Chern-Mather class
$\cma(\Theta)$ of the theta divisor in the Jacobian of a nonsingular
curve must be signed-effective.

Corollary~\ref{corol:abelmather} implies that
$\chi(Z,\varphi) \ge 0$, which also follows
from~\cite[Theorem~1.3]{MR1769729}. In particular, if $Z$ is a closed
subvariety of an abelian variety, then
\[
(-1)^{\dim Z}\chima(Z)=(-1)^{\dim Z}\chi(Z,\Eu_Z)\ge 0 \:.
\]
For nonsingular subvarieties $Z$, the Euler obstruction $\Eu_Z$ equals
$\one_Z$.~Then the fact that $(-1)^{\dim Z}\chi(Z)\ge 0$ is proven (in
the more general semi-abelian case) in~\cite[Corollary~1.5]{MR1769729}
(also see~\cite[(2)]{elduque.geske.maxim}). 
We note that the
  fact that $c(T^*Z)\cap [Z]$ is effective if $Z$ is a {\em
    nonsingular\/} subvariety of an abelian variety~$X$ also follows
  immediately from the fact that $T^*Z$ is globally generated, as it
  is a homomorphic image of the restriction of $T^*X$, which is
  trivial.  
Corollary~\ref{corol:abelmather} extends this result to 
{\em arbitrarily singular\/} closed subvarieties of a smooth
variety $X$ with trivial tangent bundle.  

In fact,
Corollary~\ref{corol:abelmather} also follows from Propositions~2.7
and~2.9 from \cite{schurmann.tibar}, where explicit effective cycles
representing~$\chSM(\varphi,X)$ in terms of suitable `polar classes'
are constructed. 


\section{Affine embeddings}\label{s:affine} 
An important family of positivity statements focus on the indicator
function $\one_U$ of a typically nonsingular and noncompact subvariety
$U$ of $Z\subseteq X$. In this case, among our main applications is
the proof a conjecture of Feh{\'e}r and Rim{\'a}nyi about the the
effectivity of SSM classes of Schubert cells.

Throughout this section we impose the hypotheses of
Theorem~\ref{thm:mather}(b), i.e., $X$ is a complex nonsingular
variety with $TX$ globally generated, $Z\subseteq X$ is a closed
subvariety of $X$, and the inclusion $U \hookrightarrow Z$ is an
affine morphism, with $U$ a locally closed smooth subvariety of~$Z$. 
Recall that
\[ 
\ssm(U,X) := c(TX|_Z)^{-1} \cap c_*(\one_U); \quad 
\chssm(U,X) := c(T^*X|_Z)^{-1} \cap \chc_*(\one_U) 
\] 
denote the SSM and the signed SSM classes associated to $U$. By
Theorem~\ref{thm:mather} (b),
\[ 
(-1)^{\dim U}\chssm(U,X)\in A_*(Z) 
\] 
is effective. If~$TX$ is trivial, then $(-1)^{\dim U}\chcsm(U)$ is
effective, so in particular for $X$ an abelian variety this implies
that $(-1)^{\dim U}\chi(U) \ge 0$.

\subsection{Schubert cells in flag manifolds and a conjecture of 
Feh{\'e}r and Rim{\'a}nyi}\label{ss:FR} 
Theorem~\ref{thm:mather} applies in particular if $U:=X(u)^\circ$ is a
Schubert cell in a flag manifold $X=G/P$, where $G$ is a complex
simple Lie group and $P$ is a parabolic subgroup. For example, $X$
could be a Grassmannian, or a complete flag manifold. Here $u\in W$ is
a minimal length representative for its coset in $W/W_P$, where $W$ is
the Weyl group of $G$ and $W_P$ is the Weyl group of $P$.  The
Schubert {\em cell\/}~$X(u)^\circ$ is defined to be $BuB/P$, where
$B\subseteq P$ is a Borel subgroup. It is well known that $X(u)^\circ
\cong \C^{\ell(u)}$, where $\ell(u)$ denotes the length of $u$.  The
closure $X(u)$ of $X(u)^\circ$ is the corresponding Schubert {\em
  variety,\/} and $X(u)= \bigsqcup_{w\leq u}\:X(w)^\circ$. We refer to
e.g., \cite{brion:flagv} for further details on these
definitions. Since the inclusion $X(u)^\circ \subseteq X$ is affine,
we obtain the following result.

\begin{corol}\label{cor:Schubert}
Let $X(u)^\circ$ be a Schubert cell in a generalized flag manifold
$G/P$. Then the class $(-1)^{\ell(u)} \chssm(X(u)^\circ,G/P)\in$
$A_*(X(u))$ is effective.
\end{corol}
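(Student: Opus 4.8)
The plan is to derive Corollary~\ref{cor:Schubert} as a direct specialization of Theorem~\ref{thm:mather}(b). The entire task reduces to verifying that the geometric situation at hand—a Schubert cell $X(u)^\circ$ inside the flag manifold $X=G/P$—satisfies all three hypotheses of that theorem, after which the conclusion follows verbatim with $Z:=X(u)$, $U:=X(u)^\circ$, and $\dim U=\ell(u)$.

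First I would check the hypothesis on the ambient variety. Since $G/P$ is a projective homogeneous variety (indeed a rational homogeneous space under the action of the simple Lie group $G$), its tangent bundle $T(G/P)$ is globally generated; this is the standard fact recalled in the paragraph following Theorem~\ref{thm:mather}, where complete homogeneous varieties are identified as exactly those complete $X$ with $TX$ globally generated. Thus $X=G/P$ is nonsingular with globally generated tangent bundle, as required. Next I would identify $Z$: take $Z:=X(u)=\overline{X(u)^\circ}$, the Schubert variety, which is a closed irreducible (reduced) subvariety of $G/P$, matching the requirement that $Z\subseteq X$ be a closed subvariety.

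The one genuinely substantive point—and the step I expect to be the main obstacle, though it is classical rather than hard—is verifying that the inclusion $X(u)^\circ\hookrightarrow X(u)$ is an \emph{affine} morphism with $X(u)^\circ$ a locally closed smooth subvariety. Smoothness and local closedness are immediate from the isomorphism $X(u)^\circ\cong\C^{\ell(u)}$ and the Bruhat decomposition $X(u)=\bigsqcup_{w\le u}X(w)^\circ$, which exhibits $X(u)^\circ$ as an open (hence locally closed) stratum of $X(u)$. For affineness of the open embedding, the key fact is that the complementary closed set $X(u)\smallsetminus X(u)^\circ=\bigcup_{w<u}X(w)$ is the support of an effective ample (or at least relatively ample) divisor on $X(u)$: the boundary of a Schubert variety is cut out by the vanishing of a suitable section of a line bundle pulled back from $G/P$, so its complement $X(u)^\circ$ is affine-open in $X(u)$. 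Since an open immersion whose complement is the support of an ample divisor is an affine morphism, the inclusion $X(u)^\circ\hookrightarrow X(u)$ is affine. (Alternatively, one invokes the well-known fact that each Schubert cell $X(u)^\circ=BuB/P$ is an affinely embedded orbit, a statement available in the references cited for Schubert varieties, e.g.~\cite{brion:flagv}.)

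With all hypotheses verified, Theorem~\ref{thm:mather}(b) applies directly and yields that $(-1)^{\dim U}\chssm(U,X)=(-1)^{\ell(u)}\chssm(X(u)^\circ,G/P)\in A_*(X(u))$ is effective, which is exactly the assertion of the corollary. The only care needed is bookkeeping: recording that $\dim X(u)^\circ=\ell(u)$ so that the sign $(-1)^{\dim U}$ becomes $(-1)^{\ell(u)}$, and that the relevant Chow group is $A_*(X(u))$ since the effective representative is supported on the closure $Z=X(u)$. No independent positivity argument is required here—the content lives entirely in Theorem~\ref{thm:ssmpos} and its specialization Theorem~\ref{thm:mather}—so this proof is essentially a verification of hypotheses followed by citation.
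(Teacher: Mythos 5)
Your proposal is correct and follows essentially the same route as the paper: the paper's proof is precisely the observation that $X(u)^\circ\cong\C^{\ell(u)}$ is a smooth locally closed subvariety whose inclusion is an affine morphism, so that Theorem~\ref{thm:mather}(b) applies with $Z=X(u)$, $U=X(u)^\circ$, and $\dim U=\ell(u)$. Your ample-boundary-divisor argument for affineness is a valid but unnecessarily heavy detour---since $X(u)^\circ$ is itself affine and $X(u)$ is separated, the open immersion $X(u)^\circ\hookrightarrow X(u)$ is automatically an affine morphism.
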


Recall that $A_*(G/P)$ (resp., $A_*(X(u))$) has a
$\Z$-basis given by fundamental classes $[X(v)]$ of Schubert varieties
(with~$X(v)\subseteq X(u)$, i.e., $v\leq u$).  With this understood,
Corollary~\ref{cor:Schubert} may be rephrased as follows.

\begin{corol}\label{cor:SSMaltgp} 
Let $u \in W$ and consider the Schubert expansion
\[ 
\ssm(X(u)^\circ, G/P) = \sum a(w;u) [X(w)]
\]
with $a(w; u)\in \Z$. Then $(-1)^{\ell(u)- \ell(w)} a(w;u) \ge 0 $ for
all $w$.
\end{corol}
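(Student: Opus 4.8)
The plan is to deduce Corollary~\ref{cor:SSMaltgp} from the effectivity statement in Corollary~\ref{cor:Schubert}, after translating between the signed and unsigned SSM classes. First I would record the elementary duality: for any vector bundle $E$ on $Z$ and any class $\alpha=\sum_i\alpha_i\in A_*(Z)$ (with $\alpha_i\in A_i(Z)$), writing $\alpha^\vee:=\sum_i(-1)^i\alpha_i$ for the sign change on the dimension-$i$ component, one has $(c(E)\cap\alpha)^\vee=c(E^*)\cap\alpha^\vee$; this is immediate from $c_j(E^*)=(-1)^jc_j(E)$ and degree bookkeeping. Applying this with $E=TX|_Z$ and $\alpha=\ssm(U,X)$, and using the definitions $\ssm(U,X)=c(TX|_Z)^{-1}\cap c_*(\one_U)$, $\chssm(U,X)=c(T^*X|_Z)^{-1}\cap\chc_*(\one_U)$ together with $\chc_*(\one_U)=(c_*(\one_U))^\vee$, yields the clean identity $\chssm(U,X)=\ssm(U,X)^\vee$. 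For $U=X(u)^\circ$ this reads $\chssm(X(u)^\circ,G/P)=\sum_w(-1)^{\ell(w)}a(w;u)[X(w)]$, whence
\[
(-1)^{\ell(u)}\chssm(X(u)^\circ,G/P)=\sum_w(-1)^{\ell(u)-\ell(w)}a(w;u)\,[X(w)]\/,
\]
since $(-1)^{\ell(u)+\ell(w)}=(-1)^{\ell(u)-\ell(w)}$.

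Next I would invoke Corollary~\ref{cor:Schubert}, which asserts that the left-hand side is an effective class in $A_*(X(u))$. Pushing forward along the proper inclusion $X(u)\hookrightarrow G/P$ preserves both effectivity and the Schubert coefficients (each $[X(w)]$ with $w\le u$ maps to the Schubert class $[X(w)]$ in $A_*(G/P)$), so it suffices to show that an effective class in $A_*(G/P)$ has non-negative coordinates in the Schubert basis $\{[X(w)]\}_w$. I would prove this by the standard duality/transversality method: the fundamental classes of the opposite Schubert varieties $X^w:=\overline{B^-wP/P}$ are dual to the $[X(w)]$ under the intersection pairing on the nonsingular variety $G/P$, so the coefficient of $[X(w)]$ in a class $\beta$ equals $\deg(\beta\cdot[X^w])$. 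If $\beta$ is represented by an effective cycle $C$, then by Kleiman's transversality theorem a general translate $g\cdot X^w$ meets $C$ properly (transversally along smooth loci), so $\deg(C\cdot gX^w)$ is a count of points with non-negative multiplicities, hence $\ge 0$. Applying this to $\beta=(-1)^{\ell(u)}\chssm(X(u)^\circ,G/P)$ gives $(-1)^{\ell(u)-\ell(w)}a(w;u)\ge 0$ for every $w$, which is the claim (the integrality of the $a(w;u)$ is automatic, since the Schubert classes form a $\Z$-basis).

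I expect the main obstacle to be this second step, namely that effectivity forces non-negativity of the Schubert coefficients. One must verify that the dimension-$i$ components of the effective class correspond exactly to the $[X(w)]$ with $\ell(w)=i$, and that the pairing with opposite Schubert classes is both perfect and sign-coherent; the transversality input relies on the homogeneity of $G/P$ under the $G$-action, which is why carrying out the moving on the smooth ambient $G/P$ (via proper push-forward) rather than on the possibly singular $X(u)$ is convenient. Everything else — the identity $\chssm(U,X)=\ssm(U,X)^\vee$ and the passage from Corollary~\ref{cor:Schubert} to the coefficientwise inequality — is purely formal.
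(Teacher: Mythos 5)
Your proof is correct and takes essentially the same route as the paper, which derives this corollary as a direct rephrasing of Corollary~\ref{cor:Schubert} using the Schubert basis of $A_*(X(u))$. The two steps you supply explicitly---the sign identity $\chssm(U,X)=\ssm(U,X)^\vee$ and the fact that an effective class has non-negative Schubert coefficients (which you justify by pushing forward to the smooth $G/P$ and pairing with opposite Schubert classes via Kleiman transversality; a $B$-equivariant degeneration of effective cycles to non-negative combinations of Schubert classes would work equally well, even directly on $X(u)$)---are precisely the bookkeeping the paper leaves implicit, and both are carried out correctly.
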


A similar positivity statement was conjectured by Feh{\'e}r and
Rim{\'a}nyi in~\S1.5 and Conjecture~8.4 of the paper
\cite{feher.rimanyi:csmdeg}. Their conjecture is stated for certain
degeneracy loci in quiver varieties, and in the `universal' situation
where the ambient space is a vector space with a group action. The
Schubert cells and varieties in the flag manifolds of Lie type~A are
closely related to a compactified version of such quiver
loci\footnote{One example are the matrix Schubert varieties, regarded
  in the space of all matrices. The study of those of maximal rank is
  closely related to Schubert varieties in the Grassmannian.}.  After
passing to the compactified version of the statements from
\cite{feher.rimanyi:csmdeg}, Corollary~\ref{cor:SSMaltgp} proves the
conjecture from \cite{feher.rimanyi:csmdeg} in the Schubert instances;
see \cite[\S6 and \S7]{feher2018motivic} for a comparison between the
`universal' and `compactified' versions. A specific comparison between
our calculations and those from~\cite{feher.rimanyi:csmdeg} is
included in the following example. We note that in arbitrary Lie type
a description of Schubert varieties via quiver loci is not available.

\begin{example}\label{ex:FR} 
Let $X= \Gr(2,5)$ be the Grassmann manifold parametrizing subspaces of
dimension $2$ in $\C^5$. In this case one can index the Schubert cells
by partitions included in the $2 \times 3$ rectangle, such that each
cell has dimension equal to the number of boxes in the partition. With
this notation, and using the calculation of CSM classes of Schubert
cells from \cite{aluffi.mihalcea:csm}, one obtains the following
matrix encoding Schubert expansions of SSM classes of Schubert cells:

\[ 
\begin{pmatrix} 
1 & - 4 & 5 & 4 & -2 & -10 & 5 & 4 & -4 & 1 \\ 
0 & 1 & -3 & -3 & 2 & 10 & -7 & -5 & 7 & -2 \\ 
0 & 0 & 1 & 0 & -2 & -3 & 7 & 3 & -9 & 3 \\ 
0 & 0 & 0 & 1 & 0 & -3 & 2 & 2 & -3& 1 \\ 
0 & 0 & 0 & 0 & 1 & 0 & -3 & 0 & 3 & -1\\ 
0 & 0 & 0 & 0 & 0 & 1 & -2 & -2 & 5 & -2 \\ 
0 & 0 & 0 & 0 & 0& 0 & 1 & 0 & -2 & 1\\ 
0 & 0 & 0 & 0 & 0 & 0 & 0 & 1 & -2 & 1\\ 
0 & 0 & 0 & 0 & 0 & 0 & 0 & 0 & 1 & -1\\ 
0 & 0 & 0 & 0 & 0 & 0 & 0 & 0 & 0 & 1 
\end{pmatrix} \]

The columns, read left to right, and rows, read top to bottom, are
indexed by:
\[ 
\emptyset\,, \tableau{5}{{}}\,, \tableau{5}{{}&{}}\,, \tableau{5}{{} \\ {}}\,,
\tableau{5}{{}& {} & {}}\,,  \tableau{5}{& {}\\ {}& {}}\,,  
\tableau{5}{& & {}\\ {}& {}&  {}}\,, \tableau{5}{{}&{}\\ {}&{}}\,, 
\tableau{5}{& {} & {} \\ {} & {} & {}}\,, \tableau{5}{{}& {}&{}\\ {} & {}&{}} ~\/. 
\]

After taking duals in the $2 \times 3$ rectangle, these give the same
coefficients as in equation~(3) from \cite{feher.rimanyi:csmdeg}. (The
calculations in \cite{feher.rimanyi:csmdeg} are done in a stable
limit, therefore for our purposes one disregards partitions not
included in the given rectangle.) Another example is given by the
calculation of the SSM class for the partition $(3,1)$ in $\Gr(2,6)$:
\[ 
\ssm((\tableau{5}{ & & {} \\ {} & {} & {} })^\circ)
= \tableau{5} { & & {} \\ {} & {} & {} }
- 3~\tableau{5} { & {} \\ {} & {} } - 4~\tableau{5} { {} & {} & {} }
+ 13~\tableau{5} { {} & {} } + 5~\tableau{5} { {} \\ {} }
- 22~\tableau{5}{ {} } 
+22~\emptyset \/.
\]
(Here $\lambda$ denotes the Schubert class indexed by $\lambda$, and
$\lambda^\circ$ is the indicator function of the Schubert cell.)  This is
consistent with \cite[Example 8.3]{feher.rimanyi:csmdeg}.
\qede\end{example} If the parabolic subgroup $P$ is the Borel subgroup
$B$, Corollary~\ref{cor:SSMaltgp} is equivalent to the positivity of
CSM classes of Schubert cells,
\cite[Corollary~1.4]{AMSS:shadows}. Indeed, in this case
\[
\ssm(X(u)^\circ, G/B) = (-1)^{\ell(u)} \chc_*(\one_{X(u)^\circ})
\]
as shown in~\cite[Corollary~7.4]{AMSS:shadows}. This equality does not
hold for more general flag manifolds $G/P$, and its proof relies on
additional properties relating the CSM/SSM classes to Demazure-Lusztig
operators from the Hecke algebra
\cite{aluffi.mihalcea:eqcsm,AMSS:shadows}. From this prospective, SSM
classes appear to have a simpler behavior than the CSM classes, and
one can obtain positivity-type statements for a larger class of
varieties.

\subsection{Complements of hyperplane arrangements}\label{ss:hyp}
A typical example of an affine embedding $U \subseteq X$ is the
complement $U=X\smallsetminus D$ of a hypersurface $D\subseteq Z:=X$.
In particular, one can consider a projective hyperplane arrangement
$\cA$ in complex projective space $X=\Pbb^n$, with $A:=D$ the union of
hyperplanes and $U=\Pbb^n\smallsetminus A$ its complement. In this
particular case Theorem~\ref{thm:mather}(b) recovers a consequence of
the following result of \cite[Corollary~3.2]{aluffi:hyperplane}:
\[
\csm(U)=\pi_{\widehat{\cA}}\left(\frac{-h}{1+h}\right)\cap 
\left( c(T\Pbb^n)\cap [\Pbb^n]\right) \:.
\]
Here $h$ denotes the hyperplane class in $\Pbb^n$, $\widehat{\cA}$ is
the corresponding `central arrangement' in~$\C^{n+1}$ with
$\widehat{A}$ its union of linear hyperplanes, and
$\pi_{\widehat{\cA}}$ denotes the corresponding `Poincar\'e
polynomial' of $\widehat{\cA}$ (see
e.g., \cite[p.~1880]{aluffi:hyperplane}):
\[
\pi_{\widehat{\cA}}(t)= \sum_{k=0}^{n+1} \: rk\:
H^k(\C^{n+1}\smallsetminus \widehat{A},\Q)t^k\:.
\]
In particular the Poincar\'e polynomial $\pi_{\widehat{\cA}}$ has
non-negative coefficients and constant term one, and
\[
(-1)^{\dim U}\chssm(U,\Pbb^n)=\pi_{\widehat{\cA}}
\left(\frac{h}{1-h}\right)\cap [\Pbb^n]
\]
is effective.


\section{Donaldson-Thomas type invariants}\label{ss:DT} 
Let $Z\subseteq X$ be a closed reduced subscheme of $X$ as before.
K.~Behrend (\cite[Definition~1.4, Proposition~4.16]{MR2600874})
defines a constructible function $\nu_Z$ and proves that if $Z$ is
proper, then the dimension-$0$ component of $c_*(\nu_Z)$ equals the
corresponding virtual fundamental class $[Z]^{\text{vir}}$, a
`Donaldson-Thomas type invariant' in the terminology
of~\cite[p.~1308]{MR2600874}.

The characteristic cycle of Behrend's constructible
  function $\nu_Z$ is effective because the intrinsic normal cone of
  $Z$ is effective.  More explicitly, $\nu_Z$ is defined as a weighted
  sum
\[
\nu_Z:=\sum_Y (-1)^{\dim Y }\mult(Y) \cdot \Eu_Y\quad,
\]
where the summation is over the supports $Y$ of the components of the
intrinsic normal cone of $Z$ and $\mult(Y)$ is the multiplicity of the
corresponding component. (Cf.~\cite[Definition~1.4]{MR2600874}.) Since
these multiplicities are positive, $\CC(\nu_Z)$ is effective,
cf.~\eqref{eq:eqconst}.

\begin{corol}\label{prop:DT}
Let $X$ be a complex nonsingular variety with globally generated
tangent bundle, and let $Z\subseteq X$ be a closed reduced subscheme.
Then $\chSM(\nu_Z,X)\in A_*(Z)$ is effective. If $TX$ is trivial, then
$\chc_*(\nu_Z)$ is effective.
\end{corol}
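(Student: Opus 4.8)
The plan is to derive Corollary~\ref{prop:DT} as a direct instance of Theorem~\ref{thm:ssmpos}, so the entire task reduces to verifying that the characteristic cycle $\CC(\nu_Z)$ is effective. Once that single fact is in hand, Theorem~\ref{thm:ssmpos} immediately yields that $\chSM(\nu_Z,X)$ is effective in $A_*(Z)$, and the stronger conclusion for trivial $TX$ follows from the final sentence of that theorem (since then $\chSM(\varphi,X)=\chc_*(\varphi)$, as recorded in \S\ref{s:abelian}). So the corollary is not really a new theorem but an application, and the proof should be short.

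First I would recall the explicit formula for Behrend's function,
\[
\nu_Z=\sum_Y (-1)^{\dim Y}\mult(Y)\cdot \Eu_Y,
\]
where the sum is over the supports $Y$ of the irreducible components of the intrinsic normal cone of $Z$ and $\mult(Y)>0$ is the multiplicity of the corresponding component (cf.~\cite[Definition~1.4]{MR2600874}). The key structural observation, already isolated in the text preceding the statement, is that this is exactly a decomposition of $\nu_Z$ in the basis $\{(-1)^{\dim Y}\Eu_Y\}$ with the coefficients $a_Y=\mult(Y)$. Comparing with equation~\eqref{eq:eqconst}, which reads $\varphi=\sum_Y a_Y(-1)^{\dim Y}\Eu_Y$, I would conclude that the coefficients $a_Y$ appearing in the characteristic cycle expansion~\eqref{E:CCexp} of $\CC(\nu_Z)$ are precisely the multiplicities $\mult(Y)$.

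Next I would invoke the criterion stated in \S2.3: a non-trivial characteristic cycle $\CC(\varphi)=\sum_Y a_Y[T^*_Y X]$ is effective if and only if all the coefficients $a_Y$ are positive. Since multiplicities of components of the intrinsic normal cone are positive by construction, every $a_Y=\mult(Y)>0$, and therefore $\CC(\nu_Z)$ is effective. With the hypothesis that $TX$ is globally generated, Theorem~\ref{thm:ssmpos} then gives the effectivity of $\chSM(\nu_Z,X)$, and the trivial-$TX$ case gives the effectivity of $\chc_*(\nu_Z)$; this completes the argument.

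The only substantive point, and thus the main obstacle, is the positivity of the coefficients, which rests on knowing that $\nu_Z$ is genuinely an effective combination of signed Euler obstructions---equivalently, that the intrinsic normal cone is an honest (positive) cycle with positive component multiplicities. This is where I would lean entirely on Behrend's work: the intrinsic normal cone is by definition a cone, hence effective, so its class is a non-negative combination of its components. Everything else is formal bookkeeping translating the normal-cone decomposition of $\nu_Z$ through \eqref{eq:eqconst} into the sign pattern of $\CC(\nu_Z)$, after which Theorem~\ref{thm:ssmpos} does all the geometric work.
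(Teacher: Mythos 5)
Your proposal is correct and follows essentially the same route as the paper: the paper likewise observes that Behrend's formula $\nu_Z=\sum_Y(-1)^{\dim Y}\mult(Y)\cdot\Eu_Y$, read through~\eqref{eq:eqconst}, exhibits $\CC(\nu_Z)$ as an effective cycle because the multiplicities of the components of the intrinsic normal cone are positive, and then applies Theorem~\ref{thm:ssmpos} to conclude both the effectivity of $\chSM(\nu_Z,X)$ and, when $TX$ is trivial, of $\chc_*(\nu_Z)$. No gaps; your identification of the coefficients $a_Y$ with the multiplicities via the uniqueness of the expansion in the basis $\{(-1)^{\dim Y}\Eu_Y\}$ is exactly the paper's bookkeeping.
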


In particular for $X$ an abelian variety this implies that
$[Z]^{\text{vir}}$ is non-negative and hence
\[
\chi_{vir}(Z):=\chi(Z,\nu_Z) = deg([Z]^{\text{vir}})\ge 0\:.
\]

\section{Characteristic classes from intersection cohomology}\label{s:charIH}
Another particular case of interest is the characteristic class
of the intersection cohomology sheaf complex. If $Z \subseteq X$ is a
closed subvariety, let $\caIC_Z \in Perv(Z)$ denote the intersection
cohomology complex associated to $Z$. This is the key example of a
perverse sheaf on $Z$,
cf.~\cite{goresky-macpherson},~\cite[Definition~8.2.13]{HTT}
or~\cite[p.~385]{schurmann:book}. The associated constructible
function is $\IC_Z:= \chi_{stalk( \caIC_Z)}$.  We let
\[ 
\cIH(Z):= (-1)^{\dim Z} c_*(\IC_Z) \/,
\] 
be the {\em intersection homology Chern class\/} of $Z$. 
{(Note that $\cIH(Z)$ is an element of the Chow group of $Z$,
not of its intersection homology.)} The sign is
introduced in order to ensure that $\cIH(Z)=c(TZ)\cap [Z]$ if $Z$ is
nonsingular\footnote{More generally, $\cIH(Z)=\csm(Z)$ if $Z$ is a
  rational homology manifold (e.g., $Z$ has only quotient
  singularities).  In fact a quasi-isomorphism $\caIC_Z\simeq
  \C_Z[\dim Z]$ characterizes a rational homology manifold~$Z$
  \cite[p.~34]{borho.macpherson}, see
  also~\cite[Proposition~8.2.21]{HTT}.}.

Similarly, if $Z$ has a {\em small\/} resolution of singularities $f:
Y\to Z$, then $\caIC_Z\simeq Rf_*\C_Y[\dim Y]$ (see
e.g.,~\cite[Example 6.0.9, p.~400]{schurmann:book}) so that
\[ 
\cIH(Z)=f_*(c(TY)\cap [Y]) 
\] 
in this case. This class corresponds to the constructible function $(-1)^{\dim Z}
f_*(\one_Y)$.~We also consider the signed version, $\chcIH(Z)$. For
$Z$ complete, the degree of the zero-dimensional component of
$\cIH(Z)$ equals the `intersection homology Euler characteristic' of
$Z$, $\chiIH(Z)=(-1)^{\dim Z}\chi(Z,\IC_Z)$.  By the functoriality of
$c_*$ and $\chi_{stalk}$, $\chiIH(Z)$ agrees with the intersection
homology Euler characteristic defined as alternating sum of ranks of
intersection homology groups, as in e.g.,~\cite{elduque.geske.maxim}.

More generally, let $f: Y\to Z$ be a
proper morphism. Using that $\chi_{stalk}$ commutes with
$Rf_*$~\cite[\S2.3]{schurmann:book} one may define
$\varphi:=\chi_{stalk}(Rf_*\C_Y)=f_*(\one_Y)$; then
$c_*(\varphi)=f_*c_*(\one_Y)$ by the functoriality of $c_*$.

Theorem~\ref{thm:ssmpos} and Proposition~\ref{prop:holon} imply the
following result.

\begin{corol}\label{prop:abelIC}
Let $Z$ be a closed subvariety of a smooth variety $X$ with $TX$
globally generated. Then $\chSM(\IC_Z,X)$ is effective.  If $TX$ is
trivial (e.g., $X$ is an abelian variety), then 
$(-1)^{\dim Z}\chcIH(Z)=\chcsm(\IC_Z)$ is effective.
\end{corol}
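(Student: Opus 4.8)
The plan is to recognize $\IC_Z=\chi_{stalk}(\caIC_Z)$ as a constructible function of exactly the type treated in Proposition~\ref{prop:holon}, and then to apply that proposition directly. By construction the intersection cohomology complex $\caIC_Z$ is a perverse sheaf on $Z$, and under the Riemann--Hilbert correspondence of diagram~\eqref{E:diagram} it corresponds to a regular holonomic $\caD_X$-module supported on $Z$. Thus all the hypotheses of Proposition~\ref{prop:holon} are in place, provided only that $\caIC_Z$ (equivalently, $\IC_Z$) is non-trivial.

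First I would verify this non-triviality, since it is what licenses the application of Proposition~\ref{prop:holon}. On the smooth locus one has $\caIC_Z|_{Z^{reg}}\simeq \C_{Z^{reg}}[\dim Z]$, so the stalkwise Euler characteristic equals $(-1)^{\dim Z}\neq 0$ at every point of $Z^{reg}$; hence $\IC_Z\neq 0$. With this in hand, Proposition~\ref{prop:holon} applies verbatim and yields the first assertion: $\chSM(\IC_Z,X)$ is effective whenever $TX$ is globally generated. Equivalently, one may route the argument through Theorem~\ref{thm:ssmpos}, using that the characteristic cycle of the non-trivial regular holonomic $\caD_X$-module attached to $\caIC_Z$ is effective; this is the genuine input, and it is exactly what Proposition~\ref{prop:holon} packages.

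For the case in which $TX$ is trivial, I would invoke the identity recorded in \S\ref{s:abelian}: when $TX$ is trivial, $\chSM(\varphi,X)=\chc_*(\varphi)$ for every constructible function $\varphi$, because then $c(T^*X|_Z)=1$. Taking $\varphi=\IC_Z$ gives $\chSM(\IC_Z,X)=\chc_*(\IC_Z)=\chcsm(\IC_Z)$, which is therefore effective by the first part. The remaining equality $(-1)^{\dim Z}\chcIH(Z)=\chcsm(\IC_Z)$ is then a matter of unwinding sign conventions: from $\cIH(Z)=(-1)^{\dim Z}c_*(\IC_Z)$, together with the fact that passing to the signed class (flipping the sign of each odd-dimensional homogeneous component) commutes with multiplication by the global scalar $(-1)^{\dim Z}$, one obtains $\chcIH(Z)=(-1)^{\dim Z}\chc_*(\IC_Z)$, and hence $(-1)^{\dim Z}\chcIH(Z)=\chc_*(\IC_Z)=\chcsm(\IC_Z)$.

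Because every step reduces to a result already established in this section, the argument is short rather than hard: the only substantive content is the granted Proposition~\ref{prop:holon}. The two points that deserve a line of care are precisely the verification that $\IC_Z\neq 0$, so that the proposition may legitimately be invoked, and the bookkeeping of signs identifying $(-1)^{\dim Z}\chcIH(Z)$ with $\chcsm(\IC_Z)$ in the trivial-tangent-bundle case.
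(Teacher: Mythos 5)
Your proof is correct and takes essentially the same route as the paper, which deduces the corollary directly from Theorem~\ref{thm:ssmpos} and Proposition~\ref{prop:holon} by viewing $\IC_Z=\chi_{stalk}(\caIC_Z)$ as the constructible function attached to the perverse sheaf $\caIC_Z$. The two points you spell out---non-triviality of $\IC_Z$ via its value $(-1)^{\dim Z}$ on $Z^{reg}$, and the sign bookkeeping giving $(-1)^{\dim Z}\chcIH(Z)=\chc_*(\IC_Z)=\chcsm(\IC_Z)$ when $TX$ is trivial---are details the paper leaves implicit, and you verify them correctly.
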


In particular for $X$ an abelian variety this implies that $(-1)^{\dim
  Z}\chiIH(Z) \ge 0$,
recovering~\cite[Theorem~5.3]{elduque.geske.maxim}. In case $Z$ has a
{\em small resolution\/} $f: Y\to Z$,
\[ 
(-1)^{\dim Z}\chcIH(Z)=f_*(c(T^*Y)\cap [Y])=(-1)^{\dim Y}
\chc_*(f_*\one_Y) 
\]
{and this class is effective by~Corollary~\ref{prop:abelIC}.}

\section{Effective charactersitic cycles (II)}\label{s:poschar} 
In this section we collect several instances of positive
characteristic cycles. Some of these were already used in the previous
sections, and they will be {reproduced here for completeness.}

\begin{prop}\label{prop:ICeff} 
Let $X$ be a complex nonsingular variety, and let $Z\subseteq X$ be a
closed reduced subscheme. If $\varphi \in \conF(Z)$ is the
constructible function in one of the cases listed below, then
$\CC(\varphi)$ is an effective cycle.
\begin{enumerate} 
\item[(a)] $\varphi = (-1)^{\dim Z} \Eu_Z$ for $Z$ a closed subvariety
  of $X$.
\item[(b)] $\varphi = \nu_Z$ (Behrend's constructible function,
  see~\S\ref{ss:DT}).
\item[(c)] $\varphi = \chi_{stalk}(\cF)$ for a non-trivial perverse
  sheaf $\cF \in \Perv(Z)$, e.g.:
\begin{enumerate}
\item[(c1)] $\varphi= \IC_Z$ for $Z$ a closed subvariety of $X$, see
  \S\ref{s:charIH}.
\item[(c2)] $\varphi= (-1)^{\dim Z} \one_Z$ for $Z$ pure-dimensional
  and smooth, or more generally a rational homology manifold.
\item[(c3)] $\varphi= (-1)^{\dim Z} \one_Z$ for $Z$ pure-dimensional
  with only local complete intersection singularities (i.e., $Z
  \hookrightarrow X$ is a regular embedding).
\item[(c4)] $\varphi= (-1)^{\dim Y} f_*\one_Y$ for a proper surjective
  semi-small morphism of varieties $f:Y \to Z$, with $Y$ a rational
  homology manifold and $Z$ a closed subvariety of $X$. 
  (See~\S\ref{ss:ssmall} for more on semi-small maps.) 
\item[(c5)] $\varphi = (-1)^{\dim U} \one_U$, where $U \subseteq Z$ is
  a (not necessarily closed) subvariety, such that the inclusion
  $U\hookrightarrow Z$ is an affine morphism and $\C_U[\dim U]$ is a
  perverse sheaf on $U$ (e.g., $U$ is smooth, a rational homology
  manifold, or with only local complete intersection singularities).
\end{enumerate}
\end{enumerate} 
\end{prop}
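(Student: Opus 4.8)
**The plan is to establish each case by exhibiting the claimed constructible function $\varphi$ as the stalkwise Euler characteristic $\chi_{stalk}(\cF)$ of an honest (nontrivial) perverse sheaf, or an associated nontrivial regular holonomic $\caD_X$-module, and then invoke the single structural fact recorded in the proof of Theorem~\ref{thm:ssmpos}: by the Riemann--Hilbert correspondence, the characteristic cycle of a nontrivial regular holonomic $\caD$-module is effective (see~\cite[p.~119]{HTT}), and equivalently $\Char \circ \DR^{-1}(\cF) = \CC(\chi_{stalk}(\cF))$ is effective for $\cF \in \Perv(Z)$ via the commutativity of diagram~\eqref{E:diagram}. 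Thus each item reduces to a \emph{perversity} (or holonomicity) check together with identifying the associated constructible function, after which effectivity is automatic. I would first dispatch (a) directly from~\eqref{eq:CC}: $\CC((-1)^{\dim Z}\Eu_Z) = [T^*_Z X]$ is a single conormal cycle, hence effective with no further input. Case (b) follows from the explicit expansion of $\nu_Z$ recalled in \S\ref{ss:DT}: since $\nu_Z = \sum_Y (-1)^{\dim Y}\mult(Y)\cdot \Eu_Y$ with all $\mult(Y) > 0$ (these being multiplicities of components of the intrinsic normal cone), equation~\eqref{eq:eqconst} shows every coefficient $a_Y = \mult(Y)$ is positive, so $\CC(\nu_Z)$ is effective.

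Case (c) is the heart of the matter, and it is where I would route everything else. The general statement (c) is exactly Proposition~\ref{prop:holon}'s input: a nontrivial $\cF \in \Perv(Z)$ corresponds under Riemann--Hilbert to a nontrivial regular holonomic $\caD$-module, whose characteristic cycle is effective. So the work in (c1)--(c5) is purely to verify that the indicated $\varphi$ arises from a perverse sheaf. For (c1), $\caIC_Z \in \Perv(Z)$ by definition and $\IC_Z = \chi_{stalk}(\caIC_Z)$, so there is nothing to prove beyond citing \S\ref{s:charIH}. For (c2) and (c3) the point is that $\C_Z[\dim Z]$ is perverse: when $Z$ is smooth (or more generally a rational homology manifold) this is standard since $\C_Z[\dim Z] \simeq \caIC_Z$ up to quasi-isomorphism~\cite[Proposition~8.2.21]{HTT}; when $Z$ is a local complete intersection of pure dimension, $\C_Z[\dim Z]$ is again perverse because a regular embedding makes $Z$ Cohen--Macaulay with the shifted constant sheaf satisfying the support and cosupport conditions (this is the self-duality/perversity of $\C_Z[\dim Z]$ for lci singularities). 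In each case $\chi_{stalk}(\C_Z[\dim Z]) = (-1)^{\dim Z}\one_Z$, matching the stated $\varphi$.

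For (c4), the key input is the decomposition-type behavior of semismall maps: when $f\colon Y \to Z$ is proper, surjective, and semismall with $Y$ a rational homology manifold, the pushforward $Rf_*(\C_Y[\dim Y]) \simeq Rf_*(\caIC_Y)$ is a \emph{perverse} sheaf on $Z$ (this is the defining homological consequence of semismallness, and I would cite the semismall decomposition theorem, with the forward reference to~\S\ref{ss:ssmall}). Since $\chi_{stalk}$ commutes with $Rf_*$~\cite[\S2.3]{schurmann:book}, the associated constructible function is $(-1)^{\dim Y} f_*\one_Y$, as claimed, and effectivity follows from (c). For (c5), the relevant fact is that for an affine morphism $j\colon U \hookrightarrow Z$, the functor $j_!$ preserves perversity (\cite[Lemma~6.0.2, p.~384 and Theorem~6.0.4, p.~409]{schurmann:book}, as already invoked in Remark~\ref{rmk:perverseU}); so if $\C_U[\dim U]$ is perverse on $U$, then $j_!\C_U[\dim U]$ is perverse on $Z$, with $\chi_{stalk}(j_!\C_U[\dim U]) = (-1)^{\dim U}\one_U$ by the affineness/extension-by-zero computation, placing us again under (c). The main obstacle is not any single calculation but assembling the correct perversity statement for each singularity class in (c2)--(c5): the lci case of (c3) and the semismall case of (c4) each rest on a nontrivial structural theorem (perversity of $\C_Z[\dim Z]$ for lci, and of $Rf_*\caIC_Y$ for semismall $f$), and the care is in citing these precisely rather than rederiving them. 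Once perversity is in hand, every case collapses uniformly to the effectivity of characteristic cycles of perverse sheaves.
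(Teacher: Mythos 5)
Your proposal is correct and follows essentially the same route as the paper's own proof: cases (a) and (b) via~\eqref{eq:CC} and the positive-multiplicity expansion~\eqref{eq:eqconst}, and all of (c1)--(c5) reduced to the effectivity of characteristic cycles of non-trivial perverse sheaves (Proposition~\ref{prop:holon}, via Riemann--Hilbert and holonomic $\caD$-modules), with the same perversity verifications and essentially the same citations the paper uses. One small caution: in (c3) your parenthetical heuristic (Cohen--Macaulayness, self-duality of $\C_Z[\dim Z]$) is not the actual reason perversity holds for local complete intersections---e.g., $\C_Z[1]$ on a nodal curve is perverse but not Verdier self-dual; the correct input is the vanishing result for Milnor fibers of lci singularities that the paper cites as \cite[Example~6.0.11, p.~404]{schurmann:book}---but since you explicitly defer to citing the structural theorem rather than rederiving it, the argument stands.
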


\begin{proof} 
Part (a) follows from equation \eqref{eq:CC}; part (b) from the
discussion in \S\ref{ss:DT}; part(c) from Proposition~\ref{prop:holon};
part (c1) is discussed in \S\ref{s:charIH}; part (c5)
{combines}
Theorem~\ref{thm:mather} and Remark~\ref{rmk:perverseU}. The
remaining statements are proved as follows:

(c2): $\C_Z[\dim Z]$ is a perverse sheaf for $Z$
  pure-dimensional and smooth, with
\[
\chi_{stalk}(\C_Z[\dim Z])=(-1)^{\dim Z} \one_Z
\]
by definition.  The corresponding regular holonomic $\caD$-module is
just $\cO_Z$. Similarly, $\C_Z[\dim Z]$ is a perverse sheaf for $Z$
pure-dimensional and a rational homology manifold, since then
$\caIC_Z\simeq \C_Z[\dim Z]$; cf.~\cite[p.~34]{borho.macpherson}
or~\cite[Proposition~8.2.21]{HTT}.

(c3): $\C_Z[\dim Z]$ is a perverse sheaf for $Z$
  pure-dimensional with only local complete intersection
  singularities~\cite[Example 6.0.11, p.~404]{schurmann:book}.
  
(c4): In the given hypotheses, the push-forward
$Rf_*\C_Y[\dim Y]$ is a perverse sheaf;
\newline  cf.~\cite[Example 6.0.9, p.~400]{schurmann:book} 
  or~\cite[Definition~8.2.30]{HTT}.  Of course
\[
\chi_{stalk}(Rf_*\C_Y[\dim Y]) = (-1)^{\dim Y} f_*\one_Y \:,
\]
since $f$ is proper.
\end{proof}

To check that for a constructible function $\varphi$, the coefficients
$a_Y$ in the expansion $\CC(\varphi)=\sum_Y a_Y [T^*_Y X]$ are
nonnegative, one can also use the following description
of~$\CC(\varphi)$ in terms of `stratified Morse theory for
constructible functions' from~\cite{schurmann:lectures,
  schurmann.tibar} or~\cite[\S5.0.3]{schurmann:book}:
\[
\CC(\varphi)=\sum_S \; (-1)^{\dim S}\cdot \chi(NMD(S),\varphi)
\cdot [\overline{T^*_SX}]
\]
if $\varphi$ is constructible with respect to a complex algebraic
Whitney stratification of $Z$ with connected smooth strata $S$.  Here
$ \chi(NMD(S),\varphi)$ is the Euler characteristic of a corresponding
{\em normal Morse datum\/} $NMD(S)$ weighted by $\varphi$.  Then
$\CC(\varphi)$ is non-negative (resp., effective) if and only if
\[
(-1)^{\dim S}\cdot \chi(NMD(S),\varphi)\geq 0
\]
for all $S$ (and, resp.,~$(-1)^{\dim S'}\cdot \chi(NMD(S'),\varphi)>0$
for at least one stratum $S'$). If the complex of sheaves $\cF$ is
constructible with respect this complex algebraic Whitney
stratification of $Z$, then one gets for $\varphi:=\chi_{stalk}(\cF)$
and $x\in S$~\cite[(5.38) on p.~294]{schurmann:book}:
\[
\chi(NMD(\cF,x)[-\dim S]) = (-1)^{\dim S}\cdot \chi(NMD(S),\varphi)\:.
\] 
This leads to a direct proof of Proposition~\ref{prop:holon} in the case
of perverse sheaves,
without using $\caD$-modules: $NMD(\cF,x)[-\dim S]$ as above is
concentrated in degree zero for all $S$ if and only if $\mathcal{F}$
is a perverse sheaf ~\cite[Remark 6.0.4, p.~389]{schurmann:book}. This
argument also shows that the condition that $\CC(\chi_{stalk}(\cF))$
be {\em effective\/} is much weaker than the condition that $\cF$ be
perverse.

We end this section by listing some basic operations of constructible
functions which preserve the property of having an effective 
characteristic cycle. These operations may be used to construct 
many more examples to which our Theorem~\ref{thm:ssmpos} 
applies.

\begin{prop}\label{prop:new effective} 
Let $Z$ be a closed reduced subscheme of a nonsingular complex
algebraic variety $X$, and assume that $\varphi$ is a constructible
function on $Z$ with $\CC(\varphi)$ effective.
\begin{enumerate}
\item Let $Z'$ be a closed reduced subscheme of a nonsingular variety
  $X'$, with $\CC(\varphi')$ effective. Then $\CC(\varphi\boxtimes
  \varphi')$ is also effective for the constructible function
  $\varphi\boxtimes \varphi'$ on $Z\times Z'$ defined by
\[
(\varphi\boxtimes \varphi')(z,z'):=\varphi(z)\cdot\varphi'(z')\:.
\]
\item 
Let $f: Z\to Z'$ be a {\em finite\/} morphism, i.e., $f$ is
  proper with finite fibers, with $Z'$ a closed reduced subscheme of a
  nonsingular complex algebraic variety $X'$. Then $f_*(\varphi)$ is a
  constructible function on $Z'$ with $\CC(f_*(\varphi))$
  effective. Here
\[
f_*(\varphi)(z'):=\sum_{z\in f^{-1}(z')}\: \varphi(z)\:.
\]
\item Let $f: X'\to X$ be a morphism of nonsingular complex algebraic
  varieties such that $f: Z':=f^{-1}(Z)\to Z$ is a {\em smooth\/}
  morphism of relative dimension $d$.  Then $(-1)^d
  f^*(\varphi)=(-1)^d \varphi\circ f$ is a constructible function on
  $Z'$ with effective characteristic cycle.
\item Let $f: X\to \C$ be a morphism and let $D$ be the hypersurface
  $\{f=0\}$.  Denote by $\psi_f: \conF(Z)\to \conF(Z\cap D)$ the
  corresponding specialization of constructible
  functions~\cite[\S2.4.7]{schurmann:lectures}. Here
\[
\psi_f(\varphi)(x):=\chi(M_{f|Z,x},\varphi)\:,
\]
with $M_{f|Z,x}$ a {\em local Milnor fiber\/} of $f|Z$ at $x\in Z\cap
D$.  Then $\CC(-\psi_f(\varphi))$ is {\em non-negative.\/} It is {\em
  effective\/} in case $\varphi\neq 0$ has a presentation as
in~\eqref{eq:eqconst} and at least one~$Y$ with $a_Y>0$ is not
contained in $D$.
\item Let $f: X'\to X$ be a morphism of nonsingular complex algebraic
  varieties such that $f$ is {\em non-characteristic\/} with respect
  to the support $\supp(\CC(\varphi))$ of the characteristic cycle of
  $\varphi$ (e.g., $f$ is {\em transversal\/} to all strata $S$ of a
  complex algebraic Whitney stratification of $Z$ for which $\varphi$
  is constructible). Let $d:=\dim X' - \dim X$. Then $(-1)^d
  f^*(\varphi)=(-1)^d \varphi\circ f$ is a constructible function on
  $Z':=f^{-1}(Z)$ with {effective} characteristic cycle.
\end{enumerate}
\end{prop}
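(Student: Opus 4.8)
The plan is to reduce each of the five cases to a statement about the induced operation on conic Lagrangian cycles, exploiting that every one of the listed operations on constructible functions is already known to be compatible with the characteristic cycle map $\CC$ of diagram~\eqref{E:diagram}; these compatibilities are recorded in Sch\"urmann's work (see \cite{schurmann:lectures, schurmann:book, schurmann:transversality}). Granting them, it suffices to verify that the corresponding operation on conic Lagrangian cycles sends effective cycles to non-negative (respectively effective) cycles. Throughout I write $\CC(\varphi)=\sum_Y a_Y[T^*_Y X]$ as in~\eqref{E:CCexp}, with all $a_Y>0$ by hypothesis, and I use that effectivity of a characteristic cycle is precisely positivity of the (a priori nonzero) coefficients $a_Y$ in~\eqref{eq:eqconst}.

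For the four \emph{clean} cases (1), (2), (3), (5), the induced operation carries each irreducible conormal cycle $[T^*_Y X]$ to a positive integer multiple of an irreducible conormal cycle --- or, in case (1), to an external product of such --- so positivity of coefficients is preserved term by term. Concretely, in (1) one uses $T^*_{Y\times Y'}(X\times X')=T^*_Y X\times T^*_{Y'}X'$ together with the multiplicativity $\Eu_{Y\times Y'}=\Eu_Y\boxtimes\Eu_{Y'}$, so that $\CC(\varphi\boxtimes\varphi')=\CC(\varphi)\boxtimes\CC(\varphi')$ and the coefficients multiply to $a_Y a_{Y'}>0$. In (3) and (5), smoothness (respectively the non-characteristic condition) is exactly what makes the cotangent pullback clean: $f^{-1}(Y)$ has the expected codimension and its conormal occurs with multiplicity one, the shift by $d$ producing the sign $(-1)^d$; this is the microlocal pullback underlying the transversality results of \cite{schurmann:transversality}. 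In (2), finiteness forces $\dim f(Y)=\dim Y$, so no collapsing occurs; computing the coefficient of $[T^*_{S'}X']$ in $\CC(f_*\varphi)$ through the stratified-Morse formula recalled above, and using that $f$ is generically \'etale over each target stratum, expresses $(-1)^{\dim S'}\chi(NMD(S'),f_*\varphi)$ as a sum over the equidimensional preimage strata of the corresponding non-negative normal Morse contributions of $\varphi$, hence again non-negative, and effective when $\varphi$ is.

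The genuinely delicate case is (4), the specialization $\psi_f$, and this is where I expect the main obstacle. Here $-\psi_f$ on constructible functions corresponds to the perverse nearby-cycle functor ${}^p\psi_f=\psi_f[-1]$ --- the shift being the source of the sign --- and $\CC$ intertwines this with the specialization map on conic Lagrangian cycles (see \cite[\S2.4.7]{schurmann:lectures}). Unlike the previous cases, specialization can \emph{annihilate} a conormal cycle, and this happens precisely for the components $[T^*_Y X]$ with $Y\subseteq D$, so one cannot argue to positivity term by term. The key input I would invoke is that specialization of conic Lagrangian cycles is an \emph{effective} operation: built from a flat degeneration to the normal cone of $D$, it carries effective cycles to non-negative cycles with no cancellation. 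Granting this, $\CC(-\psi_f(\varphi))$ is automatically non-negative; and when some $Y$ with $a_Y>0$ satisfies $Y\not\subseteq D$, the specialization of $[T^*_Y X]$ contributes a nonzero effective summand supported over $Y\cap D$, which cannot be cancelled by the remaining non-negative contributions, so the result is effective. The work therefore concentrates on making the ``specialization is effective'' statement precise at the level of cycles and on identifying which components survive; everything else is bookkeeping with signs and dimensions.
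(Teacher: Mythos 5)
Your overall plan---expand $\CC(\varphi)$ into conormal atoms via \eqref{E:CCexp}--\eqref{eq:eqconst} and push each operation through the known compatibilities of $\CC$---is the paper's strategy, and your treatments of (1), (3) and (4) essentially coincide with the paper's proofs (in (4), one imprecision: by \cite[Theorem~4.3]{sabbah:quelques} what gets specialized is the \emph{relative} conormal $[T^*_{f|Y}X]$, not $[T^*_YX]$ itself, and the annihilation for $Y\subseteq D$ happens already at the level of functions, since $\psi_f(\chEu_Y)=0$ by definition when $Y\subseteq\{f=0\}$; with that reading, your per-atom argument is exactly the paper's). The genuine gap is in your blanket ``clean'' claim for (2) and (5). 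For (5), non-characteristic is strictly weaker than transversal, and the operation does \emph{not} send a conormal to a multiplicity-one conormal: take $Y\subseteq X=\C^2$ a curve with a node at $0$ and $f:X'\hookrightarrow X$ a line through $0$ transverse to both branch tangents; then $f$ is non-characteristic for $T^*_YX$, but $\Eu_Y(0)=2$ gives $(-1)^d f^*\chEu_Y=2\cdot\one_{\{0\}}$ locally, so the output contains $2[T^*_0X']$; a secant line of a smooth conic even produces the reducible cycle $[T^*_pX']+[T^*_qX']$. So term-by-term ``multiplicity one'' fails and effectivity requires an actual argument. The paper supplies it by unwinding the two-step proof of \cite[Theorem~3.3]{schurmann:transversality}: a submersion factor reduces to case (3), and a closed embedding is reduced inductively to hypersurface slices $\{f=0\}$, where case (4) applies---the non-characteristic hypothesis guaranteeing $Y\not\subseteq\{f=0\}$ whenever $[T^*_YX]$ occurs with positive multiplicity, so the \emph{effectivity} (not merely non-negativity) clause of (4) is available.

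For (2), your stratified-Morse computation has a concrete hole: a covector $\xi$ nondegenerate at $x'\in S'$ pulls back under $f$ to covectors at the points of $f^{-1}(x')$ that need \emph{not} be nondegenerate for the source stratification, so the local Morse data upstairs do not split into the non-negative quantities $(-1)^{\dim S}\chi(NMD(S),\varphi)$; ``generically \'etale over each target stratum'' does not repair this, because genericity of $\xi$ downstairs does not give genericity of $f^*\xi$ upstairs. The simplest instance already shows the structure: for $f:\C\to\C$, $z\mapsto z^2$, and $\varphi=\chEu_{\C}=-\one_{\C}$ one computes $\CC(f_*\varphi)=2[T^*_{\C}\C]+[T^*_0\C]$, i.e.\ there is a correction term at the branch point whose non-negativity is precisely the content of the claim, not an automatic consequence of ``no collapsing.'' The paper's proof is designed to avoid genericity altogether: using the graph embedding it assumes $f:X\to X'$ is a submersion, and in the correspondence $T^*X\leftarrow f^*T^*X'\rightarrow T^*X'$ the finiteness of $\tau$ on $df^{-1}(\supp\CC(\varphi))$ (via the conic Lagrangian bound) forces $\dim df^{-1}(\supp\CC(\varphi))\leq\dim X'$, so $df^*\CC(\varphi)=\CC(\varphi)\cap[f^*T^*X']$ is a \emph{proper} intersection of an effective cycle with a smooth subvariety---hence has non-negative multiplicities with no genericity input---and the finite pushforward $\tau_*$ preserves effectivity, yielding $\CC(f_*\varphi)$ by \cite[\S4.6]{schurmann:lectures}. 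If you wish to keep the Morse-theoretic language, you would need the intersection/index formula for possibly \emph{degenerate} covectors, which is this proper-intersection argument in microlocal guise; as written, your step would fail exactly at branch and singular-image loci.
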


\begin{proof} These results can be deduced  from the following facts:

(1) $\CC(\varphi\boxtimes\varphi')=\CC(\varphi)\boxtimes
  \CC(\varphi')$, which follows from $\Eu_Y\boxtimes
  \Eu_{Y'}=\Eu_{Y\times Y'}$ \cite{macpherson:chern}, or from
  stratified Morse theory for constructible functions or
  sheaves~\cite[(5.6) on p.~277]{schurmann:book}:
\[
\chi(NMD(S),\varphi)\cdot  \chi(NMD(S'),\varphi')= \chi(NMD(S\times S'),
\varphi\boxtimes \varphi') \:.
\]

(2) Using the graph embedding, we can assume that the finite map $f:
  Z\to Z'$ is induced from a submersion $f: X\to X'$ of ambient
  nonsingular varieties.  Consider the induced correspondence of
  cotangent bundles:
\[
\begin{CD}
T^*X @< df << f^*T^*X' @> \tau >> T^*X'\:.
\end{CD}
\]
Here $df$ is a closed embedding (since $f$ is a submersion), and 
\[
\tau: df^{-1}(\supp(\CC(\varphi)))\to T^*X'|Z'
\]
is finite, since $f: Z\to Z'$ is finite.  Now $\tau (df^{-1}
\supp(\CC(\varphi)))$ is known to be contained in a conic Lagrangian
subset of $T^*X'|Z'$ (e.g., coming from a stratification of
$f$~\cite[(4.16) on p.~249]{schurmann:book}). Therefore its dimension
is bounded from above by~$\dim X'$. Then also the dimension of
$df^{-1}(\supp(\CC(\varphi)))$ is bounded from above by~$\dim X'$ by
the finiteness of $\tau$, so that
\[ 
df^*(\CC(\varphi))=\CC(\varphi) \cap [f^*T^*X'] 
\]
is a {\em proper intersection.\/} But then
$\tau_*(df^*(\CC(\varphi)))$ is an {\em effective\/} cycle on
$T^*X'|Z'$, and~\cite[\S4.6]{schurmann:lectures}:
\[
\tau_*(df^*(\CC(\varphi)))=\CC(f_*(\varphi))\:.
\]

(3) This follows from $f^*\Eu_Y=\Eu_{f^{-1}(Y)}$ for $Y$ a closed
  subvariety in $Z$.  This can be checked locally, e.g., for $f:
  Z\times Y'\to Z$ the projection along a smooth factor $Y'$, with
  $f^*\Eu_Y=\Eu_Y\boxtimes \one_{Y'} = \Eu_Y\boxtimes \Eu_{Y'}$.

(4) Again it is enough to consider $\chEu_Y:=(-1)^{\dim Y}\Eu_Y$ for
  some subvariety $Y$ of $Z$. If $Y\subseteq \{f=0\}$, then
  $\psi_f(\chEu_Y)=0$ by definition.  So we can assume $Y\not\subseteq
  \{f=0\}$. Then $\CC(-\psi_f(\chEu_Y))$ is 
  by~\cite[Theorem~4.3]{sabbah:quelques} the (Lagrangian) specialization 
  of the
  relative conormal space $[T^*_{f|Z}X]$ along the hypersurface
  $\{f=0\}$, so that it is also effective.

(5) Consider again the induced correspondence of cotangent bundles:
\[
\begin{CD}
T^*X' @< df << f^*T^*X @> \tau >> T^*X\:.
\end{CD}
\]
Then by definition, $f$ is {\em non-characteristic\/} with respect to
the support $\supp(\CC(\varphi))$ of the characteristic cycle of
$\varphi$ if and only if
\[
df: \tau^{-1}(\supp(\CC(\varphi))) \to T^*X'
\]
is {\em proper\/} and therefore finite, 
cf.~\cite[Lemma~3.2]{schurmann:transversality} or~\cite[Lemma~4.3.1,
  p.~255]{schurmann:book}. If $f$ is non-characteristic, then
\[
\CC((-1)^df^*(\varphi)) =df_*( \tau^*(\CC(\varphi))) 
\]
by~\cite[Theorem~3.3]{schurmann:transversality}, and this cycle is
{\em effective\/} if $\CC(\varphi)$ is effective. Indeed the proof 
of~\cite[Theorem~3.3]{schurmann:transversality} is done in two steps:
first for a submersion, where our claim follows from the case~(3)
above; then the case of a closed embedding of a nonsingular subvariety
is (locally) reduced by induction to the case of a hypersurface of
codimension one (locally) given by an equation $\{f=0\}$. Here it is
deduced from case~(4) above, with $Y\not\subseteq \{f=0\}$ by the
`non-characteristic' assumption if $[T^*_YX]$ appears with positive
multiplicity in $\CC(\varphi)$.  
\end{proof}
\section{Further applications}\label{sec:further}

In this final section we explain further applications of
Theorem~\ref{thm:ssmpos} and Proposition~\ref{prop:holon} via the
theory of perverse sheaves.

\subsection{Semi-small maps}\label{ss:ssmall}  
Recall that a morphism $f: Y \to Z$ is called {\em semi-small\/} if
for all~$i>0$,
\[ 
\dim \{z\in Z|\;\dim f^{-1}(z)\geq i\} \leq \dim Z -2i \/;
\] 
the morphism $f$ is {\em small\/} if in addition all inequalities are
strict for $i>0$. See \cite[p.~30]{borho.macpherson},
\cite[Example~6.0.9, p.~400]{schurmann:book}, or
\cite[Definition~8.2.29]{HTT}.

\begin{prop}\label{prop:semi-small} 
Let $f: Y\to Z$ be a proper surjective semi-small morphism of
varieties, with $Y$ a rational homology manifold and $Z$ a closed
subvariety of a smooth variety $X$ with $TX$ globally generated. Then
$(-1)^{\dim Y}\chSM(f_*\one_Y,X)$ is effective.

In particular, if $TX$ is trivial then $(-1)^{\dim
  Y}\chc_*(f_*\one_Y)=(-1)^{\dim Y}f_*\chcsm(Y)$ is effective.  If
moreover $X$ is complete (i.e., an abelian variety) then $(-1)^{\dim
  Y}\chi(Y) \ge 0$.
\end{prop}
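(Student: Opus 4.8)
The plan is to apply Proposition~\ref{prop:holon} to the constructible function $\varphi:=(-1)^{\dim Y}f_*\one_Y$ on $Z$; by that proposition it suffices to exhibit a non-trivial perverse sheaf on $Z$ whose associated constructible function is~$\varphi$. The natural candidate is $\cF:=Rf_*\C_Y[\dim Y]$, and I would verify two things: that $\chi_{stalk}(\cF)=\varphi$, and that $\cF$ is perverse. The first point is immediate: since $f$ is proper, $\chi_{stalk}$ commutes with $Rf_*$ (as in~\cite[\S2.3]{schurmann:book}), so
\[
\chi_{stalk}(Rf_*\C_Y[\dim Y])=(-1)^{\dim Y}f_*\chi_{stalk}(\C_Y)=(-1)^{\dim Y}f_*\one_Y=\varphi\:.
\]
This also shows $\varphi\neq 0$, because $f$ is surjective.

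The hard part will be the perversity of $\cF$, and this is exactly where the semi-small hypothesis enters. Since $Y$ is a rational homology manifold we have $\C_Y[\dim Y]\simeq \caIC_Y$, so $\C_Y[\dim Y]$ is a self-dual perverse sheaf; the decomposition-theorem circle of ideas then guarantees that the proper push-forward of this perverse sheaf along the semi-small map $f$ remains perverse (cf.~\cite[Example~6.0.9, p.~400]{schurmann:book} or~\cite[Definition~8.2.30]{HTT}). With $\cF$ perverse and non-trivial, Proposition~\ref{prop:holon}---equivalently Theorem~\ref{thm:ssmpos} together with the effectivity of $\CC(\varphi)$ recorded in Proposition~\ref{prop:ICeff}(c4)---yields that $\chSM(\varphi,X)=(-1)^{\dim Y}\chSM(f_*\one_Y,X)$ is effective.

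For the remaining assertions I would specialize. If $TX$ is trivial then $\chSM(\varphi,X)=\chc_*(\varphi)$, so it remains to identify $\chc_*(f_*\one_Y)$ with $f_*\chcsm(Y)$. This follows because $f_*$ preserves the dimension of cycles, and hence commutes with the sign-changing operation defining $\chc_*$, combined with the functoriality $c_*(f_*\one_Y)=f_*c_*(\one_Y)$ of MacPherson's transformation under proper push-forward. Thus $(-1)^{\dim Y}\chc_*(f_*\one_Y)=(-1)^{\dim Y}f_*\chcsm(Y)$ is effective. Finally, when $X$ is complete (an abelian variety), both $Z$ and $Y$ are complete, and I would read off $(-1)^{\dim Y}\chi(Y)\ge 0$ by taking the degree of the zero-dimensional component of this effective class: the sign operation and the proper push-forward leave the degree-zero part unchanged, and that degree equals $\chi(Y)$ by the normalization of $c_*$. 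I expect no genuine difficulty beyond the perversity step; the only subtlety worth flagging is that $Y$ is assumed merely to be a rational homology manifold rather than smooth, but this is precisely what makes $\C_Y[\dim Y]$ self-dual and perverse, so the argument goes through verbatim.
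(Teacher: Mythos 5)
Your proof is correct and takes essentially the same route as the paper: the paper's proof likewise observes that $Rf_*\C_Y[\dim Y]$ is a perverse sheaf (citing the very same references, \cite[Example 6.0.9, p.~400]{schurmann:book} and \cite[Definition~8.2.30]{HTT}) with $\chi_{stalk}(Rf_*\C_Y[\dim Y])=(-1)^{\dim Y}f_*\one_Y$ by properness, and then concludes via Proposition~\ref{prop:holon}. One cosmetic point: surjectivity of $f$ alone does not force $f_*\one_Y\neq 0$ (fibers could have vanishing Euler characteristic); here semi-smallness guarantees the generic fiber is finite and non-empty, which is what actually yields the non-triviality required by Proposition~\ref{prop:holon}.
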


The simplest example of a 
{proper\/}
semi-small map $f: Y\to Z:=f(Y)\subseteq X$
is a closed embedding. 
A smooth projective variety $Y$ has a proper
semi-small morphism (onto its image) into an abelian variety $X$ if
and only if its Albanese morphism $alb_X: X\to Alb(X)$ is semi-small
(onto its image)~\cite[Remark 1.3]{LMW:generic}. The corresponding
signed Euler characteristic bound $(-1)^{\dim Y}\chi(Y) \ge 0$ is
further refined in~\cite[Corollary~5.2]{popa.schnell}.  As an example,
if $C$ is a smooth curve of genus $g\geq 3$ and $X$ is its Jacobian,
then the induced Abel-Jacobi map $C^d\to C^{(d)}\to X$ (with $C^{(d)}$
the corresponding symmetric product) is semi-small (onto its image)
for $1\leq d\leq g-1$~\cite[Corollary~12]{weissauer}.

\begin{proof}[Proof of Proposition~\ref{prop:semi-small}]
By the given hypotheses, the push-forward
$Rf_*\C_Y[\dim Y]$ is a perverse sheaf;
  cf.~\cite[Example 6.0.9, p.~400]{schurmann:book} 
  or~\cite[Definition~8.2.30]{HTT}.  Further
\[
\chi_{stalk}(Rf_*\C_Y[\dim Y]) = (-1)^{\dim Y} f_*\one_Y \:,
\]
since $f$ is proper.
{The statement follows then from Proposition~\ref{prop:holon}.}
\end{proof}

\subsection{Regular embeddings and Milnor classes}\label{Milnor classes}
For this application, assume that $Z\subseteq X$ is a regular
embedding, as in \cite[Appendix~B.7]{fulton:IT}. For instance, $Z$
could be a smooth closed subvariety, a hypersurface, or a local
complete intersection in $X$.

\begin{prop}\label{prop:abel-lci}
Let $X$ be a complex nonsingular variety such that $TX$ is globally
generated, and let $Z \subseteq X$ be a regular embedding.  Then
$(-1)^{\dim Z}\chssm(Z,X)$ is effective.  If~$TX$ is trivial, then
$(-1)^{\dim Z}\chcsm(Z)$ is effective.
\end{prop}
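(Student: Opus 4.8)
The plan is to reduce the statement directly to Theorem~\ref{thm:ssmpos} by choosing the right constructible function and checking that its characteristic cycle is effective. The natural choice is $\varphi := (-1)^{\dim Z}\one_Z$. Because $c_*$ is a group homomorphism and $\chc_*$ is obtained from it by a degreewise change of sign, the assignment $\varphi \mapsto \chSM(\varphi,X)$ is linear; hence
\[
\chSM(\varphi,X) = (-1)^{\dim Z}\,\chSM(\one_Z,X) = (-1)^{\dim Z}\,\chssm(Z,X).
\]
By Theorem~\ref{thm:ssmpos} it therefore suffices to prove that $\CC(\varphi)$ is an effective cycle in $T^*X|_Z$.

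The single substantive ingredient, and the step I expect to be the crux, is the perversity of the shifted constant sheaf on a local complete intersection. A regular embedding $Z \hookrightarrow X$ forces $Z$ to be pure-dimensional with at worst local complete intersection singularities, and for such $Z$ the complex $\C_Z[\dim Z]$ is a perverse sheaf; this is precisely where the lci hypothesis enters, via the support/cosupport conditions, which hold because local complete intersections are Cohen--Macaulay. I would simply invoke this fact (it is recorded in Proposition~\ref{prop:ICeff}(c3), following \cite[Example~6.0.11, p.~404]{schurmann:book}). Since $Z$ is a nonempty variety, $\one_Z \neq 0$, so $\varphi = \chi_{stalk}(\C_Z[\dim Z]) = (-1)^{\dim Z}\one_Z$ is a \emph{nonzero} constructible function associated with a perverse sheaf on $Z$; its associated regular holonomic $\caD_X$-module is nontrivial, and the characteristic cycle of any nontrivial holonomic $\caD$-module is effective. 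Thus $\CC(\varphi)$ is effective, exactly as in Proposition~\ref{prop:holon}.

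Combining the two steps, Theorem~\ref{thm:ssmpos} will show that $\chSM(\varphi,X) = (-1)^{\dim Z}\chssm(Z,X)$ is effective, which is the first assertion. For the second, when $TX$ is trivial the signed SM and CM classes coincide, so the same theorem yields effectivity of $\chc_*(\varphi) = (-1)^{\dim Z}\chc_*(\one_Z) = (-1)^{\dim Z}\chcsm(Z)$. Everything beyond the cited perversity of $\C_Z[\dim Z]$ is formal: once the characteristic cycle is known to be effective, the conclusion is a direct application of the machinery already established.
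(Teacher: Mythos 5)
Your argument coincides with the paper's own proof: both reduce the statement to Theorem~\ref{thm:ssmpos} and Proposition~\ref{prop:holon} via the perversity of $\C_Z[\dim Z]$ for a pure-dimensional local complete intersection, exactly as recorded in Proposition~\ref{prop:ICeff}(c3) following \cite[Example~6.0.11, p.~404]{schurmann:book}. One small caution: perversity of the shifted constant sheaf is not ``because local complete intersections are Cohen--Macaulay'' (Cohen--Macaulayness is only the coherent analogue of the cosupport condition); the genuine input is the rectified homological depth/Milnor fibre connectivity results for local complete intersections---but since you invoke the cited fact rather than this heuristic, your proof stands as written.
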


\begin{proof} 
The hypothesis imply that $\C_Z[\dim Z]$ is a perverse sheaf, by
Proposition~\ref{prop:ICeff}. Then the claim follows from
Theorem~\ref{thm:ssmpos} and Proposition~\ref{prop:holon}.
\end{proof}

In particular for $X$ an abelian variety this implies that $(-1)^{\dim
  Z}\chi(Z) \ge 0$, recovering~\cite[Theorem~5.4]{elduque.geske.maxim}.

{If $Z\subseteq X$ is a closed embedding, with $X$ smooth, then
the class
\[
\cful(Z):=c(TX)\cap s(Z,X) \in A_*(Z)
\]
is the {\em Chern-Fulton class\/} of $Z$; this is another intrinsic Chern class of
$Z$~\cite[Example~4.2.6]{fulton:IT}. If $Z$ is a local complete intersection in
$X$, then the normal cone $C_ZX=N_ZX$ is a vector bundle, and we have
\[
\cful(Z)=c(TX)\cap\left(c(N_ZX)^{-1}\cap [Z]\right)\in A_*(Z)\,;
\]
in this case, this class is also called the {\em virtual Chern class\/} of $Z$.}
If $Z$ is smooth, $N_ZX$
is the usual normal bundle, so that $\cful(Z)=c(TZ)\cap
[Z]=\csm(Z)$. In general, for singular~$Z$, these classes can be
different, and their difference\footnote{There are different sign
  conventions in the literature. Here we adopt the convention used in
  the original definition of Milnor classes, \cite{PP:hypersurface}.}
\[
\Mil(Z):=(-1)^{\dim Z}\left(\cful(Z)-\csm(Z)\right)\in A_*(Z)
\]
is called the {\em Milnor class\/} of $Z$. Let $\sMil(Z,X)$ be the
corresponding {\em Segre-Milnor class\/}
\begin{align*}
\sMil(Z,X)&:=c(TX)^{-1}\cap \Mil(Z)= (-1)^{\dim Z}\left(c(N_ZX)^{-1}
\cap [Z] - \ssm(Z,X)\right)\\
&\hphantom{:}= (-1)^{\dim Z}\left(s(Z,X) - \ssm(Z,X)\right)  \/.
\end{align*}
As before consider the associated signed classes $\chMil(Z)$ and
\[
\chsMil(Z,X):= c(T^*X|_Z)^{-1}\cap \chMil(Z) \:.
\]

Assume now that $X$ is projective, with very ample line bundle 
$L$, and that
\[
Z=\{s_j=0|\:j=1,\dots,r\}
\]
is the global complete intersection of codimension $r>0$ defined by
sections $s_j\in 
\Gamma(X,L^{\otimes a_j})$ 
for suitable positive
integers $a_j$. Then~\cite[Theorem~1 and
  Corollary~1]{maxim.schurmann.saito} implies that
$\Mil(Z)=c_*(\varphi)$ for a constructible function $\varphi$
associated to a perverse sheaf supported on the singular locus
$Z_{sing}$ of $Z$.\footnote{More precisely,~\cite{maxim.schurmann.saito}
studies the {\em Hirzebruch-Milnor class\/} $M_y(Z)$ of $Z$, 
measuring the difference between the {\em virtual\/} and the 
{\em motivic Hirzebruch class\/} $T^{vir}_{y*}(Z)$ and $T_{y*}(Z)$ of $Z$.
Specializing to $y=-1$ shows (\cite[Corollary~1]{maxim.schurmann.saito})
that $\Mil(Z)=c_*(\varphi)$ for $\varphi$ the constructible
function associated with the underlying perverse sheaf of a mixed Hodge 
module $\cM(s'_1,\dots,s'_r)$ determined by the sections $s_1,\dots,s_r$.}

Applying Proposition~\ref{prop:holon} we obtain:
\begin{corol}\label{cor:Milnor} 
Let $X$ be a smooth projective variety
with $TX$ globally generated, and let
the subvariety $Z=\{s_j=0|\:j=1,\dots,r\}\subseteq X$ be a global
complete intersection as described above.  Then 
$\chsMil(Z,X)\in A_*(Z)$ is non-negative.  If $X$ is
an Abelian variety, then $\chMil(Z)\in
A_*(Z)$ is non-negative.
\end{corol}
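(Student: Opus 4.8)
The plan is to reduce Corollary~\ref{cor:Milnor} to Proposition~\ref{prop:holon} by identifying a suitable perverse sheaf whose associated constructible function realizes the Milnor class. The key input, quoted in the paragraph preceding the statement, is the result of~\cite{maxim.schurmann.saito}: for a global complete intersection $Z=\{s_j=0\}$ in a smooth projective $X$ cut out by sections of sufficiently positive line bundles, one has $\Mil(Z)=c_*(\varphi)$ where $\varphi=\chi_{stalk}(\cF)$ for a perverse sheaf $\cF$ supported on $Z_{sing}$. First I would unwind the definitions to see exactly what needs to be shown. By definition $\chsMil(Z,X)=c(T^*X|_Z)^{-1}\cap\chMil(Z)$, and $\chMil(Z)$ is the signed version of $\Mil(Z)=c_*(\varphi)$, i.e.\ $\chMil(Z)=\chc_*(\varphi)$. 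Hence $\chsMil(Z,X)=c(T^*X|_Z)^{-1}\cap\chc_*(\varphi)=\chSM(\varphi,X)$, which is precisely the class Proposition~\ref{prop:holon} addresses.

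Second, I would verify that Proposition~\ref{prop:holon} applies. Its hypotheses are that $TX$ be globally generated (assumed here) and that $\varphi$ be a nonzero constructible function associated with a perverse sheaf on $Z$; both hold by the discussion above, with $\cF$ the underlying perverse sheaf of the mixed Hodge module $\cM(s'_1,\dots,s'_r)$ from the footnote. The conclusion of Proposition~\ref{prop:holon} is then exactly that $\chSM(\varphi,X)=\chsMil(Z,X)$ is effective, hence in particular non-negative, which is the first assertion. For the abelian case, when $TX$ is trivial the factor $c(T^*X|_Z)$ is $1$, so $\chsMil(Z,X)=\chMil(Z)$; the non-negativity of $\chMil(Z)$ then follows from the trivial-tangent-bundle conclusion of Theorem~\ref{thm:ssmpos} (or equivalently Corollary~\ref{corol:abelmather}), since effectivity of $\CC(\varphi)$ gives effectivity of $\chc_*(\varphi)=\chMil(Z)$.

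The main obstacle, and the only genuinely non-formal point, is the perversity (equivalently, the effectivity of $\CC(\varphi)$) of the sheaf $\cF$ produced by~\cite{maxim.schurmann.saito}. Once one grants that $\Mil(Z)$ is $c_*$ of a constructible function coming from a perverse sheaf, everything else is bookkeeping with the definitions of the signed classes. I would therefore be careful to state the dependence on the global-complete-intersection and ampleness hypotheses precisely as they enter~\cite[Theorem~1 and Corollary~1]{maxim.schurmann.saito}: these positivity conditions on the $L^{\otimes a_j}$ are what guarantee that the specialization construction yields a perverse object concentrated in a single degree rather than a general shifted constructible complex. One subtlety worth flagging is that the statement asserts \emph{non-negativity} rather than \emph{effectivity}; this weaker conclusion is appropriate because $\varphi$ could in principle vanish (e.g.\ if $Z$ is smooth, so that $\Mil(Z)=0$), in which case Proposition~\ref{prop:holon}'s nonvanishing hypothesis $\varphi\neq 0$ fails but the zero class is still trivially non-negative. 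Thus I would phrase the final step as: either $\varphi=0$, giving $\chsMil(Z,X)=0$, or $\varphi\neq0$ and Proposition~\ref{prop:holon} gives effectivity; in both cases non-negativity holds.
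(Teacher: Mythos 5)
Your proposal is correct and follows essentially the same route as the paper: the paper's proof consists precisely of invoking \cite[Theorem~1 and Corollary~1]{maxim.schurmann.saito} to write $\Mil(Z)=c_*(\varphi)$ for $\varphi$ the constructible function of a perverse sheaf supported on $Z_{sing}$, and then applying Proposition~\ref{prop:holon}. Your explicit handling of the degenerate case $\varphi=0$ (e.g.\ $Z$ smooth), explaining why the conclusion is stated as non-negativity rather than effectivity, is a point the paper leaves implicit, but it is the same argument.
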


As an illustration, if $Z$ has only isolated singularities, then one
can consider the specialization $y=-1$
from~\cite[Corollary~2]{maxim.schurmann.saito} to deduce:
\[
\varphi=\sum_{z\in Z_{sing}}\: \mu_z\cdot \one_z \quad 
\text{so that} \quad \Mil(Z)=c_*(\varphi)= \sum_{z\in Z_{sing}}\: 
\mu_z \cdot [z]\:,
\]
with $\mu_z>0$ the corresponding {\em Milnor number\/} of the isolated
complete interesction singularity $z\in Z_{sing}$.  Here the last
formula for the Milnor class $\Mil(Z)$ is due to~\cite{seade.suwa:ici,
  suwa:ici}.  Therefore in this case, $\sMil(Z,X)=\chsMil(Z)=\Mil(Z) =
\chMil(Z)$.

\subsection{Semi-abelian varieties} \label{Semi-abelian}
Recall that a {\em semi-abelian\/} variety $G$ is a group scheme given
as an extension
\[
0\to \T \to G\to A\to 0
\]
of an abelian variety $A$ by a torus $\T\simeq (\C^*)^n$ ($n\geq 0$),
so that
\[
G\simeq L_1^0\times_A \cdots \times_A L_n^0 \to A
\]
for some degree-zero line bundles $L_i$ over $A$, with $L_i^0$ the
open complement of the zero-section in the total space $L_i\to A$
(cf.~\cite[(5.5)]{MR1769729} or~\cite[p.~12]{liu-maxim-wang}). Then
the projection $p: G\to A$ has the following important stability
property:\smallskip

\begin{quote}
(stab): The group homomorphism $p_*: \conF(X')\to \conF(X)$ induced by
  the morphism $p: X'\to X$ maps the image $\im(\chi_{stalk}:
  \Perv(X')\to \conF(X'))$ to $\im(\chi_{stalk}: \Perv(X)\to
  \conF(X))$.
\end{quote}\smallskip
\noindent Note that the constant morphism $p: X'\to pt$ satisfies the
property~(stab) if and only if $X'$~has the following Euler characteristic
property:
\begin{equation}\label{Euler} 
\chi(X',\cF)\geq 0 \quad \text{for all perverse sheaves 
$\cF\in \Perv(X')$} \:,
\end{equation}
since $\Z_{\ge 0}=\ im(\chi_{stalk}: \Perv(pt)\to \conF(pt)) \subseteq
\conF(pt) =\Z$.  In particular an abelian variety 
satisfies~\eqref{Euler} by Theorem~\ref{thm:ssmpos} and
Proposition~\ref{prop:holon}.

\begin{prop}\label{prop:stab} 
The class of morphisms satisfying property~(stab) is closed under
composition. Further, the following morphisms satisfy property~(stab):
\begin{enumerate}
\item[(a)] $p: X'\to X$ is an affine morphism with all fibers
  zero-dimensional (e.g., a finite morphism or an affine inclusion).
\item[(b)] $p: X'\to X$ is an affine smooth morphism of relative
  dimension one, with all fibers non-empty, connected and of the same
  non-positive Euler characteristic $\chi_p\leq 0$.
\end{enumerate} 
\end{prop}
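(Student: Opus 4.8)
The plan is to verify property~(stab) separately for compositions and for the two listed classes of morphisms, working throughout in the derived category $D^b_c$ and translating via the formalism of diagram~\eqref{E:diagram}. The basic dictionary is that for complex algebraic varieties the ordinary and compactly supported Euler characteristics agree, so the constructible-function push-forward $p_*$ (defined by the fibrewise Euler characteristic) coincides with $p_!$ and satisfies $p_*\circ\chi_{stalk}=\chi_{stalk}\circ Rp_!$. Since $\chi_{stalk}$ is additive on distinguished triangles, for $\cG\in D^b_c(X)$ one has $\chi_{stalk}(\cG)=\sum_i(-1)^i\chi_{stalk}({}^pH^i(\cG))$, where ${}^pH^i$ denotes perverse cohomology. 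The composition statement is then immediate: for $X''\xrightarrow{p_1}X'\xrightarrow{p_2}X$ with $p_1,p_2$ both satisfying (stab), functoriality gives $(p_2\circ p_1)_*=(p_2)_*(p_1)_*$, which carries $\im(\chi_{stalk}^{X''})$ into $\im(\chi_{stalk}^{X'})$ and then into $\im(\chi_{stalk}^{X})$. For part~(a) I would show that $Rp_!$ is perverse $t$-exact: affineness gives that $Rp_!$ is left $t$-exact (preserves ${}^pD^{\ge0}$) by Artin-type vanishing, while zero-dimensionality of the fibres gives right $t$-exactness (preserves ${}^pD^{\le0}$); see~\cite[\S4.2]{schurmann:book} or~\cite{HTT}. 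Hence $Rp_!\cF\in\Perv(X)$ for $\cF\in\Perv(X')$, and $p_*\chi_{stalk}(\cF)=\chi_{stalk}(Rp_!\cF)\in\im(\chi_{stalk}^{X})$.

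For part~(b) the fibres are curves, so $Rp_!$ is no longer $t$-exact, and this is where the real content lies. Affineness still yields $Rp_!\cF\in{}^pD^{\ge0}$, while one-dimensionality of the fibres yields $Rp_!\cF\in{}^pD^{\le1}$; thus $Rp_!\cF$ has exactly two perverse cohomology sheaves, $A:={}^pH^0(Rp_!\cF)$ and $B:={}^pH^1(Rp_!\cF)$, and
\[
p_*\chi_{stalk}(\cF)=\chi_{stalk}(Rp_!\cF)=\chi_{stalk}(A)-\chi_{stalk}(B)\/.
\]
A priori this difference need not lie in the monoid $\im(\chi_{stalk}^{X})$, and cancelling this defect is the crux of the argument.

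The mechanism I would pursue is to identify the top piece $B$ geometrically and then absorb it into $A$ using the two hypotheses. Choosing a fibrewise smooth compactification $\bar p:\overline{X'}\to X$ whose complementary relative divisor is finite over $X$, and writing $p=\bar p\circ j$ with $j$ an affine open immersion, one has $Rp_!\cF=R\bar p_*(j_!\cF)$ with $j_!\cF$ perverse (affine open immersion of the complement of a relative divisor). The sheaf $B$ is then governed by the top compactly supported cohomology $H^2_c$ of the fibres; because the fibres are connected and canonically oriented, this is a constant rank-one contribution, so $B$ is a ``constant-type'' perverse sheaf. The Euler-characteristic hypothesis enters through $\dim H^1_c(\text{fibre})-\dim H^2_c(\text{fibre})=-\chi_p\ge0$, which guarantees that the $H^1_c$-contribution feeding $A$ dominates the $H^2_c$-contribution feeding $B$. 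I would use this domination to produce a monomorphism $B\hookrightarrow A$ in $\Perv(X)$ (a relative Gysin/residue map along the boundary divisor), so that the cokernel $\mathcal H$ is perverse and satisfies $\chi_{stalk}(\mathcal H)=\chi_{stalk}(A)-\chi_{stalk}(B)=p_*\chi_{stalk}(\cF)$, exhibiting the target as the stalkwise Euler characteristic of a genuine perverse sheaf.

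The \emph{main obstacle} is exactly the construction of this monomorphism $B\hookrightarrow A$ --- equivalently, a canonical and perverse-exact way of cancelling the top perverse cohomology of $Rp_!\cF$ --- carried out uniformly over the base and for an \emph{arbitrary} perverse $\cF$, not merely for the constant sheaf. Here the connectivity of the fibres is what pins down $B$, and the sign condition $\chi_p\le0$ is what upgrades a mere comparison map to an injection; the $\C^*$-bundle case relevant to semi-abelian varieties, where $\chi_p=0$ and one finds $B\cong A$ so that $p_*\chi_{stalk}(\cF)=0$ on the constant sheaf, is the model example guiding the general construction.
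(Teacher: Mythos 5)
Your handling of composition and of part (a) matches the paper's and is fine: additivity of $\chi_{stalk}$ on short exact sequences in $\Perv$ makes $\im(\chi_{stalk})$ a submonoid of $\conF(X)$, functoriality of $p_!=p_*$ gives closure under composition, and for (a) the affine plus zero-dimensional-fibre hypotheses make $Rp_!$ (equivalently $Rp_*$) $t$-exact for the perverse $t$-structure. Your amplitude computation in (b) is also correct: $Rp_!\cF$ has perverse cohomology only in degrees $0,1$, so $p_*\chi_{stalk}(\cF)=\chi_{stalk}(A)-\chi_{stalk}(B)$ with $A={}^{m}\cH^0(Rp_!\cF)$, $B={}^{m}\cH^1(Rp_!\cF)$. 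But the mechanism you propose to cancel $B$ --- a monomorphism $B\hookrightarrow A$ in $\Perv(X)$ with perverse cokernel --- is a genuine gap, and not merely because the construction is missing: such a monomorphism \emph{need not exist}. Test it on the simple objects that actually carry the difficulty, namely $\cF\simeq p^*\mathcal{G}[1]$ with $\mathcal{G}\in\Perv(X)$ simple. When $p$ is a locally trivial fibration, the projection formula gives $Rp_!\cF\simeq \mathcal{G}\otimes Rp_!\C_{X'}[1]$, hence $B\simeq\mathcal{G}$ and $A\simeq\mathcal{G}\otimes V$, where $V:=R^1p_!\C_{X'}$ is a local system of rank $1-\chi_p$. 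A monomorphism $B\to A$ is a nonzero element of $\operatorname{Hom}(\mathcal{G},\mathcal{G}\otimes V)$; for $\mathcal{G}=\C_X[\dim X]$ with $X$ smooth this equals $H^0(X,V)$, the space of monodromy invariants of $V$, which vanishes for any family of punctured curves with sufficiently nontrivial monodromy on $H^1_c$ of the fibres. Relatedly, your heuristic that $B$ is of ``constant rank-one type'' holds only for $\cF$ the shifted constant sheaf: as the computation shows, $B$ can be an arbitrary simple perverse sheaf on $X$ (and $B=0$ for, say, $\cF$ supported on a multisection of $p$, or with nontrivial monodromy along the fibres).

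The paper's proof circumvents exactly this point by never asking for a map between $A$ and $B$: membership in the \emph{monoid} $\im(\chi_{stalk})$ is a statement about constructible functions only. Since $\Perv(X')$ is a length category and $\chi_{stalk}$ is additive, one first reduces to $\cF$ \emph{simple}. Working with $Rp_*$ (perverse amplitude $[-1,0]$, Verdier dual to your $[0,1]$ for $Rp_!$): if ${}^{m}\cH^{-1}(Rp_*\cF)=0$, then $Rp_*\cF$ is itself perverse and one is done; if not, then because the fibres are non-empty and connected, \cite[Corollary~4.2.6.2, p.~111]{BBD} yields a monomorphism $p^*\bigl({}^{m}\cH^{-1}(Rp_*\cF)\bigr)[1]\hookrightarrow\cF$, which is an isomorphism by simplicity. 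Thus the only problematic simples are precisely the pullbacks, and for these a direct computation --- no morphism in $\Perv(X)$ required --- gives $p_*\chi_{stalk}(\cF)=-\chi_p\cdot\varphi'$ with $\varphi'=\chi_{stalk}({}^{m}\cH^{-1}(Rp_*\cF))\in\im(\chi_{stalk})$, a non-negative multiple since $\chi_p\le 0$. Note that in the example above $\chi_{stalk}(A)-\chi_{stalk}(B)=(1-\chi_p)\chi_{stalk}(\mathcal{G})-\chi_{stalk}(\mathcal{G})=-\chi_p\,\chi_{stalk}(\mathcal{G})$ does lie in the monoid even though no monomorphism $B\hookrightarrow A$ exists --- which is exactly why the reduction to simple objects plus the BBD criterion, rather than your relative Gysin/residue map, is the step your argument needs.
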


\begin{example}
The following morphisms $p: X'\to X$ are affine smooth morphisms of
relative dimension one, with all fibers non-empty, connected and 
{constant} non-positive Euler characteristic $\chi_p\leq 0$:
\begin{enumerate}
\item $p: L^0\to X$ is the open complement of the zero-section in the
  total space of a line bundle $L\to X$.
\item $p$ is the projection $p: X\times C\to X$ of a product with a
  smooth non-empty, connected affine curve $C$ with non-positive Euler
  characteristic $\chi(C)\leq 0$.
\item More generally, let $p: X'\to X$ be an {\em elementary
  fibration\/} in the sense of M.~Artin (cf.~\cite[Definition 1.1,
  p.~105]{andre-baldassarri}), i.e., such that it can be factorized as an open
  inclusion $j: X'\to \overline{X'}$ followed by a projective smooth
  morphism of relative dimension one $\bar{p}: \overline{X'}\to X$
  with irreducible (or connected) fibers, such that the induced map of
  the reduced complement $\bar{p}: Z:= \overline{X'}\smallsetminus
  X'\to X$ is a surjective \'{e}tale covering. Then $p: X'\to X$ is an
  affine morphism~\cite[Lemma~1.1.2, p.~106]{andre-baldassarri}. If
  $X$ is connected, then the genus $g\geq 0$ of the fibers of
  $\bar{p}$ and the degree $n\geq 1$ of the covering $\bar{p}: Z\to X$
  are constant, so that all fibers of $p$ have the same Euler
  characteristic $\chi_p=2-2g -n$.  The final assumption
  $\chi_p=2-2g -n\leq 0$ just means $(g,n)\neq (0,1)$, i.e., only the
  affine line $\A^1(\C)$ (with $\chi(\A^1(\C))=1$) is not allowed as a
  fiber of $p$.
\end{enumerate}
\end{example}

The stabilization property (stab) is preserved by compositions,
so that the projection
\[
G\simeq L_1^0\times_A \cdots \times_A L_n^0  \to L_2^0\times_A 
\cdots \times_A L_n^0 \to \cdots \to  L_n^0 \to A
\]
for a semi-abelian variety has the property (stab) by the first
example above. Similarly for the composition of `elementary
fibrations'
\[
\begin{CD}
X'_n @> p_n >> X'_{n-1} @> p_{n-1} >> \cdots @> p_2 >> X'_1 @> p_1 >>
X
\end{CD}
\]
over a connected base $X$, with all fiber Euler characteristics
$\chi_{p_i}\leq 0$.

\begin{corol}\label{corol.semi-abelian}
Assume that the morphism $p: X'\to X$ has the property (stab), with
$X$ a smooth variety such that $TX$ is globally generated.  Let
$\cF\in \Perv(X')$ be a perverse sheaf on~$X'$, with
$\varphi:=\chi_{stalk}(\cF)$.  Then $\chSM(p_*(\varphi))$ is
non-negative.  If~$TX$ is trivial, then $\chc_*(p_*(\varphi)) $ is
non-negative. In particular
$\chi(X',\cF)=\chi(X',\varphi)=\chi(X,p_*(\varphi))\geq 0$ if $X$ is
an  abelian variety.
\end{corol}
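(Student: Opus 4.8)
The plan is to reduce the statement to Proposition~\ref{prop:holon} by using property (stab) to transfer the perverse-sheaf provenance of $\varphi$ across the pushforward, realizing $p_*(\varphi)$ as $\chi_{stalk}$ of a perverse sheaf on the \emph{base} $X$. Once this is done, the effectivity of the associated characteristic cycle, and hence of $\chSM$, follows from the machinery already assembled; the only genuinely analytic ingredient is a Fubini identity for Euler characteristics needed for the final inequality.

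First I would apply property (stab) directly: since $\varphi=\chi_{stalk}(\cF)$ lies in $\im(\chi_{stalk}\colon\Perv(X')\to\conF(X'))$, property (stab) produces a perverse sheaf $\mathcal{G}\in\Perv(X)$ with $p_*(\varphi)=\chi_{stalk}(\mathcal{G})$. Next I would split into two cases according to whether $p_*(\varphi)$ vanishes; this split is forced because the conclusion is non-negativity rather than effectivity, and property (stab) does not preclude $p_*(\varphi)=0$ (indeed, for torus-fiber projections the fiberwise Euler characteristics vanish). If $p_*(\varphi)=0$ then $\chSM(p_*(\varphi),X)=0$ is trivially non-negative; if $p_*(\varphi)\neq0$ then it is a nonzero constructible function associated with the perverse sheaf $\mathcal{G}$, and Proposition~\ref{prop:holon} applies to give effectivity, hence non-negativity. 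The case of trivial $TX$ follows at once from the identity $\chSM(\psi,X)=\chc_*(\psi)$ recalled in \S\ref{s:abelian}.

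For the final Euler-characteristic inequality when $X$ is abelian, I would read off the degree of the zero-dimensional component of the non-negative class $\chc_*(p_*(\varphi))$, which is therefore $\geq0$, and identify this degree with $\chi(X,p_*(\varphi))$. The remaining point is the chain $\chi(X,p_*(\varphi))=\chi(X',\varphi)=\chi(X',\cF)$: the second equality is a basic property of $\chi_{stalk}$ (the total Euler characteristic equals the $\chi$-integral of the stalkwise function, using $\chi=\chi_c$ in the complex algebraic setting), while the first is the Fubini property for Euler-characteristic integration of constructible functions under $p_*$.

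The step requiring the most care is precisely this last identity $\chi(X,p_*(\varphi))=\chi(X',\varphi)$ for the possibly non-proper morphism $p$. One cannot invoke the functoriality of $c_*$ here, since that requires properness; instead one must use the fiberwise-Euler-characteristic definition of the constructible pushforward $p_*$ together with Fubini for $\chi$-integration. Everything else is a formal consequence of property (stab) and Proposition~\ref{prop:holon}.
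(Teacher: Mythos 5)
Your proposal is correct and follows exactly the route the paper intends: property (stab) realizes $p_*(\varphi)$ as $\chi_{stalk}$ of a perverse sheaf on $X$, so Proposition~\ref{prop:holon} gives effectivity when $p_*(\varphi)\neq 0$ (your explicit split into the case $p_*(\varphi)=0$ correctly accounts for the statement reading ``non-negative'' rather than ``effective''), and the chain $\chi(X',\cF)=\chi(X',\varphi)=\chi(X,p_*(\varphi))$ is justified in the paper, inside the proof of Proposition~\ref{prop:stab}, precisely as you say --- via $\chi_{stalk}$ commuting with $Rp_!=Rp_*$ in the complex algebraic context, i.e., Fubini for $\chi_c$-integration, not via functoriality of $c_*$ (which would indeed require properness). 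No gaps.
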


The Euler characteristic property~\eqref{Euler} for a semi-abelian
variety $G$ is due to~\cite[Corollary~1.4]{MR1769729} (but their proof
uses results about characteristic cycles on suitable compactifications
and does not extend to the more general context considered above).
The Euler characteristic property~\eqref{Euler} for an algebraic torus
$\T\simeq (\C^*)^n$ is due to~\cite[Corollary~3.4.4]{gabber-loeser} in
the $\ell$-adic context as an application of the {\em generic
  vanishing theorem\/}:
\[
H^i(\T,\cF\otimes L) = 0 \quad \text{for $i\neq 0$}
\]
for a given perverse sheaf $\cF\in \Perv(\T)$ and a {\em generic} rank
one local system $L$ on $\T$.  See 
also~\cite[Theorem~1.2]{liu-maxim-wang:mellin} 
resp.,~\cite[Theorem~1.1]{LMW:generic}
and~\cite[Theorem~1.2]{liu-maxim-wang} for the {\em generic vanishing
  theorem\/} for complex tori, resp., semi-abelian varieties and
algebraically constructible perverse sheaves in the classical topology
(as used in this paper).  The following proof of
Proposition~\ref{prop:stab}(b) is an adaption to the language of
constructible functions of techniques used in these references for
their proofs of the generic vanishing theorem. In this way we can
prove the Euler characteristic property~\eqref{Euler} in a much more
general context, e.g., for any product of connected smooth affine
curves different from the affine line $\A^1(\C)$ (instead of complex
tori).

We close the paper with the proof of Proposition~\ref{prop:stab}.

\begin{proof}[Proof of Proposition~\ref{prop:stab}]
 Note that
$\chi_{stalk}: \Perv(X)\to \conF(X)$ is {\em additive\/} in the sense
that $\chi_{stalk}(\cF)=\chi_{stalk}(\cF')+\chi_{stalk}(\cF'')$ for
any short exact sequence
\[ 
0\to \cF' \to \cF \to \cF''\to 0
\]
in the abelian category $\Perv(X)$. In particular
$\chi_{stalk}(\cF'\oplus \cF'')=\chi_{stalk}(\cF')+\chi_{stalk}(\cF')$
and the zero-sheaf is mapped to the zero-function. Therefore
$\chi_{stalk}$ induces a map from the corresponding Grothendieck group
$\chi_{stalk}: K_0(\Perv(X))\to \conF(X)$, and
\[
\im(\chi_{stalk}: \Perv(X)\to \conF(X))
\]  
is a {\em submonoid\/} of the abelian group $\conF(X)$.  Moreover,
$\chi_{stalk}$ commutes with both pushforwards $Rf_!, Rf_*$ for a
morphism $f: X'\to X$~\cite[\S2.3]{schurmann:book}, with
\[
f_!=f_*: K_0(\Perv(X'))\to K_0(\Perv(X)) \quad \text{and} 
\quad f_!=f_*: \conF(X')\to \conF(X)
\]
in this complex algebraic context~\cite[(6.41), (6.42),
  p.~413]{schurmann:book}. In particular, the pushforward for
constructible functions is functorial with
$\chi(X',\cF)=\chi(X',\varphi)=\chi(X,f_*(\varphi))$ for
$\varphi=\chi_{stalk}(\cF)$ and $\cF\in \Perv(X')$.  Also, this shows
that the property (stab) is preserved by compositions, as claimed in
Proposition~\ref{prop:stab}. The other parts of 
Proposition~\ref{prop:stab} are proved as follows.

(a) An affine morphism $p: X'\to X$ with 
  zero-dimensional fibers induces {\em exact\/}
  functors $Rp_!, Rp_*: \Perv(X')\to \Perv(X)$~\cite[Corollary~6.0.5,
    p.~397 and Theorem~6.0.4, p.~409]{schurmann:book}.

(b) Let $p: X'\to X$ be an affine smooth morphism of relative
  dimension one, with all fibers non-empty, connected and with the same
  non-positive Euler characteristic $\chi_p\leq 0$. Then the shifted
  pullback $p^*[1]: \Perv(X)\to \Perv(X')$ is {\em exact\/}, since $p$
  is smooth of relative dimension one~\cite[Lemma~6.0.3,
    p.~386]{schurmann:book}. Note that $Rp_*$ is not necessarily exact
  for the perverse t-structure. Nevertheless, since $p$ is affine of
  relative dimension one, the perverse cohomology sheaves
  $\;^{m}\cH^i(Rp_*\cF)$ vanish for $i\neq -1,0$ for every perverse
  sheaf $\cF\in \Perv(X')$ \cite[Corollary~6.0.5, p.~397 and
    Theorem~6.0.4, p.~409]{schurmann:book}.  Moreover, the abelian
  category $\Perv(X')$ is a {\em length category,\/} i.e., it is
  noetherian and artinian, so that $\cF\in \Perv(X')$ is a finite
  iterated extension of {\em simple\/} perverse sheaves on
  $X'$~\cite[Theorem~4.3.1, p.~112]{BBD}. By the additivity of
  $\chi_{stalk}$, it is enough to consider a {\em simple\/} perverse
  sheaf $\cF$ on $X'$. If $\;^{m}\cH^{-1}(Rp_*\cF)=0$, then $Rp_*F$ is
  also perverse, with
\[
p_*(\chi_{stalk}(\cF))=\chi_{stalk}(Rp_*\cF) \in \im(\chi_{stalk}: 
\Perv(X)\to \conF(X)) \:.
\] 
Assume now that $\;^{m}\cH^{-1}(Rp_*\cF)\neq 0 \in \Perv(X)$. Then
also 
\[
p^*\left( ^{m}\cH^{-1}(Rp_*\cF) \right) [1] \neq 0 \in \Perv(X')
\]
by the surjectivity of $p$. Since the fibers of $p$ are
non-empty and connected, one gets by~\cite[Corollary~4.2.6.2,
  p.~111]{BBD} a {\em monomorphism\/}
\[
0\to p^*\left( ^{m}\cH^{-1}(Rp_*\cF) \right) [1] \to \cF \:.
\]
This has to be an isomorphism $ p^*\left( ^{m}\cH^{-1}(Rp_*\cF)
\right) [1] \simeq \cF$, since $\cF$ is simple. 
As mentioned before, $Rp_!$ and $Rp_*$ induce the same
constructible function under $\chi_{stalk}$, and the stalk of $Rp_!$
calculates the compactly supported cohomology in the corresponding
fiber.  But
\[ 
\chi_{stalk}(\cF)=\chi_{stalk}\left( p^*\left( ^{m}\cH^{-1}(Rp_*\cF) 
\right) [1] \right) = -p^*(\varphi')
\]
is constant along the fibers of $p$, with 
\[
\varphi':=\chi_{stalk}( ^{m}\cH^{-1}(Rp_*\cF) ) \in  \im(\chi_{stalk}: 
\Perv(X)\to \conF(X))\:.
\]
Finally, all fibers of $p$ have by assumption the same non-positive
Euler characteristic $\chi_p\leq 0$, so that
\[
p_*(\chi_{stalk}(\cF)) =  - p_*(p^*(\varphi')) = -\chi_p \cdot 
\varphi' \in  \im(\chi_{stalk}: \Perv(X)\to \conF(X))\:,
\] 
since $\im(\chi_{stalk}: \Perv(X)\to \conF(X))$ is a submonoid of 
the abelian group $\conF(X)$.  
\end{proof}

\bibliographystyle{halpha}
\bibliography{SSMv2biblio}

\end{document}